\newtheorem{thm}{\textsc{Theorem}}
\newtheorem{cor}[thm]{\textsc{Corollary}}
\newtheorem{lem}[thm]{\textsc{Lemma}}
\newtheorem{prop}[thm]{\textsc{Proposition}}
\newtheorem*{prop*}{Proposition}
\newtheorem*{lem*}{\textsc{Lemma}}
\newtheorem*{fact*}{\textsc{\textbf{Fact}}}
\newtheorem*{exa*}{Example}
\theoremstyle{definition}
\newtheorem*{ack}{Acknowledgement}
\newtheorem{defn}[thm]{Definition}
\theoremstyle{remark}
\newtheorem{thm*}[thm]{Theorem}
\DeclareMathOperator{\im}{Im}
\DeclareMathOperator{\id}{Id}
\DeclareMathOperator{\h}{\mathrm{H}}
\DeclareMathOperator{\K}{\mathcal{K}}
\DeclareMathOperator{\fact}{Lemma~1}
\DeclareMathOperator{\proposition}{\textsc{Proposition}}
\DeclareMathOperator{\lemma}{\textsc{Lemma}}
\title{$q$-Independence of the $C^{*}$-algebra of  "Continuous Functions" on a Quantum $2\times 2$-Matrix Ball}
\author{Olof Giselsson}
\date{}
\begin{document}
\maketitle
\begin{abstract}
For $0<q<1,$ let $\mathrm{Pol}(\mathrm{Mat}_{2})_q$ be the $q$-analogue of regular functions on the open matrix ball of $2\times 2$-matrices introduced by L. Vaksman.
We show that the universal enveloping $C^{*}$-algebras of $\mathrm{Pol}(\mathrm{Mat}_{2})_{q},$ $0<q<1,$ are isomorphic for all values of $q.$ 
\end{abstract}
\section{Introduction}
Let $q\in (0,1).$ In~\cite{SV2}, L. Vaksman and S. Sinel’shchikov put forward a construction of a $q$-analogue of Hermitian symmetric spaces of non-compact type via a $q$-analogue of the Harish-Chandra embedding. This construction yields a $q$-analogue of the $*$-algebra of polynomial functions on the bounded symmetric domain that is the image of the Harish-Chandra embedding. In the simplest case (see~\cite{SV2}, section $9$), this yields the algebra $\mathrm{Pol}(\mathbb{C})_{q},$ the quantum disc. This is the $*$-algebra generated by a single generator $z$ subject to the relation
$$
z^{*} z=q^{2}z z^{*}+(1-q^{2})I.
$$
Another particular case, derived explicitly in~\cite{ssvs1}, gives the algebra $\mathrm{Pol}(\mathrm{Mat}_{n})_{q},$ a $q$-analogue of polynomial functions on the open matrix ball $\mathbb{D}_{n}:=\{Z\in \mathrm{Mat}_{n}:Z^{*}Z<I\}.$
\\

Let $C(\overline{\mathbb{D}})_{q}$ be the universal enveloping $C^{*}$-algebra of $\mathrm{Pol}(\mathbb{C}).$ It follows from the classification of its irreducible representations [~\cite{vak}, Corollary 1.13.], that for all $q,$ the $C(\overline{\mathbb{D}})_{q}$ is isomorphic to the Toeplitz algebra $C^{*}(S)$, i.e. the $C^{*}$-algebra generated by the unilateral shift $S\in\mathcal{B}(\ell^{2}(\mathbb{Z}_{+}))$ acting as $e_{k}\mapsto e_{k+1}$ on the natural orthonormal basis $\{e_{k}\}_{k\in \mathbb{Z}_{+}}$. In particular, it follows that the algebras $C(\overline{\mathbb{D}})_{q},$ $0<q<1,$ are all isomorphic.
\\

The aim of this paper is to prove a similar result for $\mathrm{Pol}(\mathrm{Mat}_{2})_{q}:$ The universal enveloping algebras $C(\overline{\mathbb{D}}_{2})_{q}$ of $\mathrm{Pol}(\mathrm{Mat}_{2})_{q}$ are isomorphic for all values of $q.$
This ties in with the larger project of trying to understand how, in $q$-analogues, the $C^{*}$-algebraic structure depends on the deformation parameter. The development up to now seems to indicate that they usually are independent of it, at least for $0<q<1.$ This was observed by S.L. Woronowicz~\cite{woron} for his compact quantum group $SU_{q}(2)$. Same results has been proven the Vaksman-Soibelman spheres, as well as similar objects by relating them to graph $C^*$-algebras. In the 90's G. Nagy proved it for $SU_{q}(3)$ (~\cite{gnagy}), whose proof was then extended by the author to work for all compact semi-simple Lie groups~\cite{gisel2}.
It also interesting to look at the limit $q\to 0,$ as for some of the examples listed above (e.g. $SU_{q}(2)$, Vaksman-Soibelman spheres), the $q$-independence can be proven by showing ismorphism with $q=0$, and that this is a graph-$C^*$-algebra ~\cite{woj}. A resent paper by M. Matassa and R. Yuncken~\cite{roma} generalized this and showed that in the case of the usual $q$-deformed compact Lie groups, one can use crystal bases to show that the limit $q\to 0$ $C^*$-algebra is coming from a higher rank graph. However, in this case (as of now), there is still the question of ismorphisms with non-zero values of $q.$
\begin{ack}
This work was supported by the RCN (Research Council of Norway), grant 300837.
\end{ack}

\subsection{ The Quantum group $\mathbb{C}[SU_{n}]_{q}$ and its irreducible representations}
Recall the definition of the Hopf algebra
$\mathbb{C}[SL_n]_q$. It is defined by the generators $\{t_{i,j}\}_{i,j=1,\ldots,n}$ and the relations
\begin{flalign*}
& t_{\alpha, a}t_{\beta, b}-qt_{\beta, b}t_{\alpha, a}=0, & a=b \quad \& \quad
\alpha<\beta,& \quad \text{or}\quad a<b \quad \& \quad \alpha=\beta,
\\ & t_{\alpha, a}t_{\beta, b}-t_{\beta, b}t_{\alpha, a}=0,& \alpha<\beta \quad
\&\quad a>b,&
\\ & t_{\alpha, a}t_{\beta, b}-t_{\beta, b}t_{\alpha, a}-(q-q^{-1})t_{\beta, a}
t_{\alpha, b}=0,& \alpha<\beta \quad \& \quad a<b, &
\\ & \det \nolimits_q \mathbf{t}=1.
\end{flalign*}
Here $\det_q \mathbf{t}$ is the  q-determinant of the matrix
$\mathbf{t}=(t_{i,j})_{i,j=1}^{n}$.

It is well known (see \cite{KlSh} or other standard book on quantum groups) that
$\mathbb{C}[SU_n]_q\stackrel{\mathrm{def}}{=}(\mathbb{C}[SL_n]_q,\star)$ is
a Hopf $*$-algebra. The co-multiplication $\Delta$, the co-unit $\varepsilon$, the antipode $S$ and the involution $\star$ are defined as follows
\begin{equation*}
\Delta(t_{i,j})=\sum_k t_{i,k}\otimes t_{k,j},\quad \varepsilon(t_{i,j})=\delta_{ij},\quad S(t_{i,j})=(-q)^{i-j}\det\nolimits_q\mathbf{t}_{ji},
\end{equation*}
 and
\begin{equation*}\label{star5}
t_{i,j}^\star=(-q)^{j-i}\det \nolimits_q\mathbf{t}_{ij},
\end{equation*}
where ${\mathbf t}_{ij}$ is the matrix derived from ${\mathbf t}$ by discarding its $i$-th row and $j$-th column.
\\

Let $\{e_{k}|k\in \mathbb{Z}_{+}\}$ be the standard orthonormal basis of $\ell^{2}(\mathbb{Z}_{+})$ and let $S$ be the isometric shift $e_{k}\mapsto e_{k+1}$ on $\ell^{2}(\mathbb{Z}_{+}).$ For $q\in (0,1),$ we now introduce the operators $d_{q},C_{q}\in \mathcal B(\ell^{2}(\mathbb{Z}_{+}))$ given by the formulas 
\begin{equation}\label{Cd}
\begin{array}{ccc}
C_{q}e_{m}=\sqrt{1-q^{2m}}e_{m}, &d_{q}e_{m}=q^{m}e_{m}& \text{ for all $k\in \mathbb{Z}_{+}.$}\\
\end{array} 
\end{equation}
Now let 

\begin{equation}\label{fan}
\begin{array}{cc}
T_{11}=S^{*}C_{q}, & T_{12}=-q d_{q},\\
T_{21}=d_{q}, & T_{22}=C_{q}S\\.
\end{array}
\end{equation}
It is not hard to verify the formulas 
$$
\begin{array}{cc}d_{q}=\sum_{j=0}^{\infty}q^{j}S^{j}(I-S S^{*})S^{*j}, & C_{q}^{2}=I-d_{q}^{2}\end{array}.
$$
It follows from this that $d_{q},C_{q}\in C^{*}(S),$ the $C^{*}$-algebra generated by $S,$ and therefore $T_{ij}\in C^{*}(S),$ for $i,j=1,2.$ 
\\

Notice that the following relations hold
\begin{equation}\label{tsu2q}
\begin{array}{ccc}
T_{11}=T_{22}^{*}& T_{11}T_{22}-q T_{12}T_{21}=0 & T_{12}T_{21}=T_{21}T_{12}\\
T_{11}T_{12}=qT_{12}T_{11} & T_{11}T_{21}=qT_{21}T_{11}& T_{21}=\sqrt{I-T_{22}T_{11}}\\
& T_{11}T_{22}=q^{2}T_{22}T_{11}+(1-q^{2})I &\\
\end{array}
\end{equation}
from which we conclude that the operators $T_{ij}$ determine an irreducible $*$-representation $\pi:\mathbb{C}[SU_{2}]_{q}\to C^{*}(S)\subset \mathcal B(\ell^{2}(\mathbb{Z}_{+}))$  \begin{equation}\label{basrep}\begin{array}{cccc}\pi(t_{ij})= T_{ij}, & \text{(for $1\leq i,j\leq 2$)}.\end{array}\end{equation}
For $1\leq i\leq n-1,$ let $\phi_{i}:\mathbb{C}[SU_{n}]_{q}\rightarrow \mathbb{C}[SU_{2}]_{q}$ be the $*$-homomorphism determined by

\begin{equation}\label{phi}
\begin{array}{ccc}
\phi_{i}(t_{i,i})=t_{1, 1}, & & \phi_{i}(t_{i+1,i+1})=t_{2, 2},\\
\phi_{i}(t_{i,i+1})=t_{1, 2}, & & \phi_{i}(t_{i+1,i})=t_{2, 1}\\
 &\text{and } \phi_{i}(t_{k,j})=\delta_{k,j}I \text{ otherwise.}&
\end{array}
\end{equation}
Then $\pi_{i}=\pi\circ \phi_{i}$ is a $*$-representation of $\mathbb{C}[SU_{n}]_{q}$ on $\ell^{2}(\mathbb{Z}_{+}).$ Moreover $\pi_{i}(\mathbb{C}[SU_{n}]_{q})\subseteq C^{*}(S).$
\\

Let $s_{i}$ denote the adjacent transposition $(i,i+1)$ in the symmetric group $S_{n}.$ 
\begin{defn}\label{surep1}
For an element $s\in S_{n}$ consider a reduced decomposition of $s=s_{j_{1}}s_{j_{2}}\dots s_{j_{m}}$ into a product of adjacent transposition and put
$$
\pi_{s}=\pi_{j_{1}}\otimes \dots \otimes \pi_{j_{m}}.
$$
\end{defn}
It is known that $\pi_{s}$ is independent of the specific reduced expression of $s,$ in the sense that another reduced decomposition gives a unitary equivalent $*$-representation.
\\

Recall that the \textit{length} of $s\in S_{n},$ denoted by $\ell(s),$ is the number of adjacent transpositions in a reduced decomposition of $s=s_{j_{1}}s_{j_{2}}\dots s_{j_{\ell(s)}}.$ For the identity element $e\in S_{n}$, we let $\ell(e)=0.$ For $s,t\in S_{n},$ it is easy to see that the inequality
$$
\ell(st)\leq \ell(s)+\ell(t).
$$
holds, and that $\ell(s^{-1})=\ell(s).$
\\

Let $\varphi=\{\varphi_{1},\dots,\varphi_{n}\}\in [0,2\pi)^{n}$ be a $n$-tuple such that 
$$
\sum_{j=1}^{n}\varphi_{j}\equiv 0 \pmod{2\pi}.
$$
Then we can define a one-dimensional $*$-representation $\chi_{\varphi}:\mathbb{C}[SU_{n}]_{q}\to \mathbb{C}$ by the formula
\begin{equation}\label{chi}
\chi_{\varphi}(t_{ij})=e^{i\varphi_{j}}\delta_{ij}.
\end{equation}
From the work of Soibelman, the following is known (with $S_{n}$ the Weyl group of $SU_{n}$)
\begin{prop}\label{soib}[~\cite{KorS}, Theorem $6.2.7$]
Every irreducible $*$-representation $\Pi$ of $\mathbb{C}[SU_{n}]_{q}$ is equivalent to one of the form 
\begin{equation}\label{surep}
\pi_{s}\otimes \chi_{\varphi}
\end{equation}
for $s\in S_{n}$ and $\varphi=[\varphi_{1},\dots,\varphi_{n}]\in [0,2\pi)^{n},\sum_{j=1}^{n}\varphi_{j}\equiv 0 \pmod{2\pi}.$ Conversly, such pairs give rise to non-equivalent irreducible $*$-representations of $\mathbb{C}[SU_{n}]_{q}.$ 
\end{prop}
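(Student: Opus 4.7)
Since this is the classical Soibelman classification, my plan is to recall the standard strategy, which has three components: well-definedness of $\pi_s$ (independence of reduced word), irreducibility together with pairwise non-equivalence of the representations $\pi_s \otimes \chi_\varphi$, and exhaustiveness of the list.

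For well-definedness, I would verify that the building blocks $\pi_i$ satisfy quantum Matsumoto-type braid relations up to unitary equivalence: $\pi_i \otimes \pi_j \cong \pi_j \otimes \pi_i$ whenever $|i-j|\geq 2$, and $\pi_i \otimes \pi_{i+1} \otimes \pi_i \cong \pi_{i+1} \otimes \pi_i \otimes \pi_{i+1}$. The first is essentially immediate since, for non-adjacent $i,j$, the maps $\phi_i$ and $\phi_j$ land in commuting $2\times 2$ blocks of the generator matrix. The second reduces to an $n=3$ check and the construction of an explicit intertwining unitary; this is where a quantum Weyl group element enters.

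For irreducibility and non-equivalence, I would analyze the joint spectrum of the commutative family $\{\Pi(t_{ii})\}_{i=1}^{n}$. Via the coproduct, these act on $\pi_s$ as commuting normal operators assembled from the $d_q, C_q$ factors of~\eqref{fan} distributed across the tensor factors of $\pi_s$, with joint spectrum of the form $q^{\vec m}\cdot e^{i\vec\varphi}$ for $\vec m \in \mathbb{Z}_+^{\ell(s)}$. A cyclicity argument then yields irreducibility, and the joint spectrum separates pairs: the unimodular part recovers $\chi_\varphi$, while the combinatorial pattern of which coordinates carry nontrivial $d_q$-factors determines $s$.

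The main obstacle is exhaustiveness, which I would handle by induction on $n$, with $n=2$ being the classical $SU_q(2)$ result underlying~\eqref{basrep}. For the inductive step, given an irreducible $\Pi$, the idea is to restrict to a natural copy of $\mathbb{C}[SU_{n-1}]_q$ sitting inside $\mathbb{C}[SU_n]_q$ (generated, up to the $q$-determinant relation, by the $t_{ij}$ with $i,j\leq n-1$), decompose the restriction into irreducibles by the inductive hypothesis, and then reconstruct $\Pi$ by reintroducing the last row and column of generators via their commutation relations with the subalgebra. The delicate step is showing that the remaining freedom is parametrised by exactly one additional simple reflection tensored with a character of the maximal torus; this matching of parameters to the Bruhat-like stratification is the technical heart of the proof in~\cite{KorS}.
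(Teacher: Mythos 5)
The paper offers no proof of this proposition: it is imported wholesale from the literature as [\cite{KorS}, Theorem 6.2.7], so there is nothing internal to compare your argument against. Your outline does follow the standard three-step route (independence of the reduced word via Matsumoto-type relations, irreducibility plus separation of parameters, exhaustiveness by induction on $n$), which is broadly how the result is established by Soibelman.

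However, the second step of your plan rests on a false premise. The diagonal generators $t_{ii}$ do not form a commutative family in $\mathbb{C}[SU_n]_q$: the defining relation with $\alpha=a=1$, $\beta=b=2$ gives $t_{11}t_{22}-t_{22}t_{11}=(q-q^{-1})t_{21}t_{12}$, and under the fundamental representation~\eqref{basrep} one computes $T_{21}T_{12}=-q\,d_q^2\neq 0$. Nor are their images normal: from~\eqref{fan}, $T_{11}=S^{*}C_q$ satisfies $T_{11}T_{11}^{*}=S^{*}C_q^{2}S\neq C_qSS^{*}C_q=T_{11}^{*}T_{11}$ (evaluate both sides on $e_0$). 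So there is no ``joint spectrum of the commutative family $\{\Pi(t_{ii})\}$'' to analyse, and the separation argument as stated does not get off the ground. The standard repair, and what is actually done in \cite{KorS}, is to work instead with matrix coefficients of the form $c^{\Lambda}_{\Lambda,\,w\Lambda}$ (quantum minors attached to extremal weight vectors): their vanishing pattern on $\pi_s$ is governed by the Bruhat order and recovers $s$, while the characters $\chi_\varphi$ are separated on the torus part. Beyond this, your text is a plan rather than a proof: none of the three steps is carried out, and the exhaustiveness argument --- which you correctly identify as the technical heart --- is deferred entirely to the reference, which is no more than the paper itself already does by citing the theorem.
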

\subsection{The $*$-algebra $\mathrm{Pol}(\mathrm{Mat}_{n})_{q}$}
In what follows $\mathbb{C}$ is a ground field and $q \in(0,1)$. We assume
that all the algebras under consideration has a unit $I.$ Consider the well-known algebra
$\mathbb{C}[\mathrm{Mat}_n]_q$ defined by its generators $z_a^\alpha$,
$\alpha,a=1,\dots,n$, and the commutation relations
\begin{flalign}
& z_a^\alpha z_b^\beta-qz_b^\beta z_a^\alpha=0, & a=b \quad \& \quad
\alpha<\beta,& \quad \text{or}\quad a<b \quad \& \quad \alpha=\beta,
\label{zaa1}
\\ & z_a^\alpha z_b^\beta-z_b^\beta z_a^\alpha=0,& \alpha<\beta \quad
\&\quad a>b,& \label{zaa2}
\\ & z_a^\alpha z_b^\beta-z_b^\beta z_a^\alpha-(q-q^{-1})z_a^\beta
z_b^\alpha=0,& \alpha<\beta \quad \& \quad a<b. & \label{zaa3}
\end{flalign}
This algebra is a quantum analogue of the polynomial algebra
$\mathbb{C}[\mathrm{Mat}_n]$ on the space of $n \times n$-matrices.
Let $A=(a_{j,k})_{j,k}\in M_{n}(\mathbb{Z}_{+})$ be a $n\times n$ matrix of positive integers $a_{j,k}$, and denote by $z(A)$ the monomial
\begin{equation}\label{fock}
\begin{array}{cc}
z(A):=\\
(z_n^n)^{a_{n,n}}(z_n^{n-1})^{a_{n,n-1}}\ldots(z_n^1)^{a_{n,1}}(z_{n-1}^n)^{a_{n-1, n}}\ldots(z_{n-1}^1)^{a_{n-1, 1}}\ldots(z_1^n)^{a_{1,n}}\ldots (z_1^1)^{a_{1,1}}.
\end{array}
\end{equation}

 It follows from the Bergman diamond lemma that monomials $z(A)$, $A\in M_{n}(\mathbb{Z}_{+}),$ form a basis for the vector space ${\mathbb C}[\mathrm{Mat}_n]_q.$ Hence ${\mathbb C}[\mathrm{Mat}_n]_q$ admits a natural gradation given by $\text{deg} (z_{k}^{j})=1$, and in general 
$\text{deg} (z(A))=|A|,$ where
\begin{equation}\label{matrixnorm}
|A|=\sum_{k,j=1}^{n}a_{k,j}.
\end{equation}

In a similar way, we write $\mathbb{C}[\overline{\mathrm{Mat}}_n]_q$ for the algebra defined by generators
$(z_a^\alpha)^*$, $\alpha,a=1,\ldots,n$, and the relations
\begin{flalign}
& (z_b^\beta)^*(z_a^\alpha)^* -q(z_a^\alpha)^*(z_b^\beta)^*=0, \quad a=b
\quad \& \quad \alpha<\beta, \qquad \text{or} & a<b \quad \& \quad
\alpha=\beta, \label{zaa1*}
\\ & (z_b^\beta)^*(z_a^\alpha)^*-(z_a^\alpha)^*(z_b^\beta)^*=0,&
\alpha<\beta \quad\&\quad a>b,& \label{zaa2*}
\\ & (z_b^\beta)^*(z_a^\alpha)^*-(z_a^\alpha)^*(z_b^\beta)^*-
(q-q^{-1})(z_b^\alpha)^*(z_a^\beta)^*=0,& \alpha<\beta \quad \& \quad a<b. &
\label{zaa3*}
\end{flalign}
A gradation in $\mathbb{C}[\overline{\mathrm{Mat}}_n]_q$ is given by $\text{deg}(z_a^\alpha)^*=-1$.
\\

Consider now the algebra $\mathrm{Pol}(\mathrm{Mat}_n)_q,$ whose generators are $z_a^\alpha$, $(z_a^\alpha)^*$, $\alpha,a=1,\dots,n$, and the list of relations is formed by \eqref{zaa1} --
\eqref{zaa3*} and
\begin{flalign}
&(z_b^\beta)^*z_a^\alpha=q^2 \cdot \sum_{a',b'=1}^n
\sum_{\alpha',\beta'=1}^n R_{ba}^{b'a'}R_{\beta \alpha}^{\beta'\alpha'}\cdot
z_{a'}^{\alpha'}(z_{b'}^{\beta'})^*+(1-q^2)\delta_{ab}\delta^{\alpha \beta},
& \label{zaa4}
\end{flalign}
with $\delta_{ab}$, $\delta^{\alpha \beta}$ being the Kronecker symbols, and
$$
R_{ij}^{kl}=
\begin{cases}
q^{-1},& i \ne j \quad \& \quad i=k \quad \& \quad j=l
\\ 1,& i=j=k=l
\\ -(q^{-2}-1), & i=j \quad \& \quad k=l \quad \& \quad l>j
\\ 0,& \text{otherwise}.
\end{cases}
$$
The involution in $\mathrm{Pol}(\mathrm{Mat}_n)_q$ is introduced in an
obvious way: $*:z_a^\alpha \mapsto(z_a^\alpha)^*$.
\\

It is known from~\cite{ssv} (Corollary 10.4) that we have an isomorphism of vector spaces
\begin{equation}\label{isotensor}
\mathbb{C}[\mathrm{Mat}_n]_q\otimes\mathbb{C}[\overline{\mathrm{Mat}}_n]_q\rightarrow \mathrm{Pol}(\mathrm{Mat}_{n})_{q}
\end{equation}
induced by the linear map $a\otimes b\mapsto a\cdot b.$
Thus, we can consider the algebras $\mathbb{C}[\mathrm{Mat}_n]_q$ and $\mathbb{C}[\overline{\mathrm{Mat}}_n]_q$ as sub-algebras of $\mathrm{Pol}(\mathrm{Mat}_{n})_{q},$ if we identify them with the images of $\mathbb{C}[\mathrm{Mat}_n]_q\otimes I$ and $I\otimes\mathbb{C}[\overline{\mathrm{Mat}}_n]_q$ respectively.
\subsection{The Case $n=2$}
In the case of $n=2$, the relations defining $\mathrm{Pol}(\mathrm{Mat}_{2})_{q}$ specialized to the following:
$$
\begin{array}{cccc}
z_{1}^{1}z_{2}^{1}=qz_{2}^{1}z_{1}^{1}, & z_{2}^{1}z_{1}^{2}=z_{1}^{2}z_{2}^{1},\\
z_{1}^{1}z_{1}^{2}=q z_{1}^{2}z_{1}^{1}, &z_{2}^{1}z_{2}^{2}=q z_{2}^{2}z_{2}^{1},\\
z_{1}^{1}z_{2}^{2}-z_{2}^{2}z_{1}^{1}=(q-q^{-1})z_{1}^{2}z_{2}^{1}, & z_{1}^{2}z_{2}^{2}=qz_{2}^{2}z_{1}^{2},
\end{array}
$$
$$
\begin{array}{cccc}
(z_{1}^{1})^{*}z_{1}^{1}=q^{2}z_{1}^{1}(z_{1}^{1})^{*}-(1-q^{2})(z_{2}^{1}(z_{2}^{1})^{*}+z_{1}^{2}(z_{1}^{2})^{*})+\\
+q^{-2}(1-q^{2})^{2}z_{2}^{2}(z_{2}^{2})^{*}+1-q^{2},\\
(z_{2}^{1})^{*}z_{2}^{1}=q^{2}z_{2}^{1}(z_{2}^{1})^{*}-(1-q^{2})z_{2}^{2}(z_{2}^{2})^{*}+(1-q^{2}),\\
(z_{1}^{2})^{*}z_{1}^{2}=q^{2}z_{1}^{2}(z_{1}^{2})^{*}-(1-q^{2})z_{2}^{2}(z_{2}^{2})^{*}+(1-q^{2}),\\
(z_{2}^{2})^{*}z_{2}^{2}=q^{2}z_{2}^{2}(z_{2}^{2})^{*}+(1-q^{2})
\end{array}
$$
$$
\begin{array}{cccc}
(z_{1}^{1})^{*}z_{2}^{1}-qz_{2}^{1}(z_{1}^{1})^{*}=(q-q^{-1})z_{2}^{2}(z_{1}^{2})^{*}, &(z_{2}^{2})^{*}z_{2}^{1}=q z_{2}^{1}(z_{2}^{2})^{*},\\
(z_{1}^{1})^{*}z_{1}^{2}-qz_{1}^{2}(z_{1}^{1})^{*}=(q-q^{-1})z_{2}^{2}(z_{2}^{1})^{*}, &(z_{2}^{2})^{*}z_{1}^{2}=q z_{1}^{2}(z_{2}^{2})^{*},\\
(z_{1}^{1})^{*}z_{2}^{2}=z_{2}^{2}(z_{1}^{1})^{*}, &( z_{1}^{2})^{*}z_{2}^{1}=z_{2}^{1}(z_{1}^{2})^{*}
\end{array}
$$
From these relations it is clear that for $\varphi_{1},\varphi_{2}\in[0,2\pi),$ the map $z_{j}^{k}\mapsto e^{i \varphi_{j}}z_{j}^{k},$ extends to an automorphism $\Psi_{\varphi_{1},\varphi_{2}}:\mathrm{Pol}(\mathrm{Mat}_{2})_{q}\to\mathrm{Pol}(\mathrm{Mat}_{2})_{q}.$
\subsection{Irreducible representations of $\mathrm{Pol}(\mathrm{Mat}_{2})_{q}$}
There exists a special faitful irreducible representation of $\mathrm{Pol}(\mathrm{Mat}_{n})_{q}$ called the \textit{Fock representation}. This is the representation $\pi_{F}:\mathrm{Pol}(\mathrm{Mat}_{n})_{q}\to \mathcal{B}(\h_{F})$ determined by the existance of a cyclic vector $v_{0}\in \h_{F}$, called a \textit{vacuum vector}, such that 
\begin{equation}\label{fock}
\begin{array}{cccc}
\pi_{F}(z_{j}^{k})^{*}v_{0}=0, & \text{(for all $1\leq j,k\leq n$).}
\end{array}
\end{equation}
The Fock representation was introduced in~\cite{ssv}, where they also proved its existance, and moreover, that any other irreducible representation on a Hilbert space that contains a cyclic vector such that~\eqref{fock} holds, is unitarily equivalent to $\pi_{F}$. In~\cite{gisel1}, it was proven that the closure of $\im\,\pi_{F}\subseteq \mathcal{B}(\h_{F})$ is isomorphic to the universal enveloping $C^{*}$-algebra of $\mathrm{Pol}(\mathrm{Mat}_{n})_{q}.$ 
\\

In~\cite{bgt} we constructed the Fock representation the following way (specialized to the case $n=2$):
\begin{enumerate}[(i)]
\item
There exists a homomorphism $\zeta :\mathrm{Pol}(\mathrm{Mat}_{2})_{q}\to \mathbb{C}[SU_{4}]$ determined on the generators as
\begin{equation}\label{zeta}
\zeta(z_{k}^{j})=(-q)^{k-n}t_{n+k,n+j}.
\end{equation}
\item
Let $\sigma\in S_{4}$ (symmetric group of degree four) be the permutation
\begin{equation}\label{sigma}
\sigma=\left(\begin{array}{cccc}1&2&3&4\\ 3&4&1&2\end{array}\right).
\end{equation}
We can write $\sigma=s_{2}s_{1}s_{3}s_{2}$ and this is a reduced expression, so that $\ell(\sigma)=4$. Let $\pi_{\sigma}:\mathbb{C}[SU_{4}]_{q}\to \ell^{2}(\mathbb{Z}_{+})^{\otimes 4}$ be the representation associated to $\sigma$ (in the sense of Definition~\ref{surep1}).
\item Then the representation 

\begin{equation}\label{fock}\pi_{\sigma}\circ \zeta:\mathrm{Pol}(\mathrm{Mat}_{2})_{q}\to \mathcal{B}(\ell^{2}(\mathbb{Z}_{+})^{\otimes 4})\end{equation}
is irreducible and unitarily equivalent to the Fock representation, with a vacuum vector given by $$e_{0}\otimes e_{0}\otimes e_{0}\otimes e_{0}.$$ We can thus let $\pi_{F}$ to be defined by the formula~\eqref{fock}.
\end{enumerate}
In~\cite{gisel1}, we proved that~\eqref{fock} is part of a general pattern, in the following sense:
For every irreducible representation $\pi:\mathrm{Pol}(\mathrm{Mat}_{n})_{q}\to \mathcal{B}(\mathrm{K})$, there exists an irreducible representation $\Pi:\mathbb{C}[SU_{2n}]\to \mathcal{B}(\mathrm{K})$ such that the following diagram is commutative
$$
\begin{xy}\xymatrix{
\mathrm{Pol}(\mathrm{Mat}_{n})_{q}\ar[r]^*{\zeta}\ar[dr]_{\pi}& \mathbb{C}[SU_{2n}]_{q}\ar[d]^*{\Pi}\\
& \mathcal{B}(\mathrm{K})
}\end{xy}
$$
If we have an equivalence $\Pi\cong \pi_{\omega}\otimes \chi_{\varphi},$ then $\omega\in S_{2n}$ is uniquely determined, while $\varphi$ may be not (see~\cite{gisel1} Theorem 3.6).
\\

Recall that for $\varphi\in [0,2\pi),$ the coherent representation $\Omega_{\varphi}:\mathrm{Pol}(\mathrm{Mat}_{n})_{q}\to \mathcal{B}(\h)$ is determined by the existence of a non-zero cyclic vector $v\in \h$ such that $\Omega_{\varphi}(z_{1}^{1})v=e^{i\varphi}v,$ $\Omega_{\varphi}((z_{1}^{1})^{*})v=e^{-i \varphi}v$ and $\Omega_{\varphi}((z_{j}^{k})^{*})v=0$ for $(j,k)\neq(1,1).$ These representations are, up to equivalence, uniquely determined and non-equivalent for different $\varphi$ (Proposition 1.3.3 in~\cite{jsw}). Note that it follows from~\cite{gisel1} that we may assume $\h=\ell^{2}(\mathbb{Z}_{+})^{\otimes 3}$ and $v=e_{0}\otimes e_{0}\otimes e_{0}.$
\\

There are $7$ families of irreducible representations of $\mathrm{Pol}(\mathrm{Mat}_{2})_{q},$ and using the box-diagram presentations of irreducible representations of $\mathrm{Pol}(\mathrm{Mat}_{n})_{q}$ developed in~\cite{gisel1}, we can present them visually as
\begin{equation}\label{reps}
\begin{array}{c}
\begin{array}{ccccc}
\begin{tikzpicture}[thick,scale=0.5]
\draw (1,0) -- (0,0) -- (0,1)--(1,1)--(1,0)--(0,0);
\draw (1,1) -- (0,1) -- (0,2)--(1,2)--(1,1)--(0,1);
\draw (2,0) -- (1,0) -- (1,1)--(2,1)--(2,0)--(1,0);
\draw (2,1) -- (1,1) -- (1,2)--(2,2)--(2,1)--(1,1);
\node at (0.5,-0.5) {$1$};
\node at (1.5,-0.5) {$2$};
\node at (2.5,0.5) {$2$};
\node at (2.5,1.5) {$1$};
\end{tikzpicture}
&
\begin{tikzpicture}[thick,scale=0.5]
\draw (1,0) -- (0,0) -- (0,1)--(1,1)--(1,0)--(0,0);
\filldraw[fill=black!20!white, draw=black]  (1,1) -- (0,1) -- (0,2)--(1,2)--(1,1)--(0,1);
\draw (2,0) -- (1,0) -- (1,1)--(2,1)--(2,0)--(1,0);
\draw (2,1) -- (1,1) -- (1,2)--(2,2)--(2,1)--(1,1);
\node at (0.5,-0.5) {$1$};
\node at (1.5,-0.5) {$2$};
\node at (2.5,0.5) {$2$};
\node at (2.5,1.5) {$1$};
\end{tikzpicture}
&
\begin{tikzpicture}[thick,scale=0.5]
\filldraw[fill=black!20!white, draw=black](1,0) -- (0,0) -- (0,1)--(1,1)--(1,0)--(0,0);
\filldraw[fill=black!50!white, draw=black]  (1,1) -- (0,1) -- (0,2)--(1,2)--(1,1)--(0,1);
\draw (2,0) -- (1,0) -- (1,1)--(2,1)--(2,0)--(1,0);
\draw (2,1) -- (1,1) -- (1,2)--(2,2)--(2,1)--(1,1);
\node at (0.5,-0.5) {$1$};
\node at (1.5,-0.5) {$2$};
\node at (2.5,0.5) {$2$};
\node at (2.5,1.5) {$1$};
\end{tikzpicture}
&
\begin{tikzpicture}[thick,scale=0.5]
\draw (1,0) -- (0,0) -- (0,1)--(1,1)--(1,0)--(0,0);
\filldraw[fill=black!50!white, draw=black] (1,1) -- (0,1) -- (0,2)--(1,2)--(1,1)--(0,1);
\draw(2,0) -- (1,0) -- (1,1)--(2,1)--(2,0)--(1,0);
\filldraw[fill=black!20!white, draw=black] (2,1) -- (1,1) -- (1,2)--(2,2)--(2,1)--(1,1);
\node at (0.5,-0.5) {$1$};
\node at (1.5,-0.5) {$2$};
\node at (2.5,0.5) {$2$};
\node at (2.5,1.5) {$1$};
\end{tikzpicture}
&
\begin{tikzpicture}[thick,scale=0.5]
\filldraw[fill=black!20!white, draw=black] (1,0) -- (0,0) -- (0,1)--(1,1)--(1,0)--(0,0);
\filldraw[fill=black!50!white, draw=black] (1,1) -- (0,1) -- (0,2)--(1,2)--(1,1)--(0,1);
\draw (2,0) -- (1,0) -- (1,1)--(2,1)--(2,0)--(1,0);
\filldraw[fill=black!20!white, draw=black](2,1) -- (1,1) -- (1,2)--(2,2)--(2,1)--(1,1);
\node at (0.5,-0.5) {$1$};
\node at (1.5,-0.5) {$2$};
\node at (2.5,0.5) {$2$};
\node at (2.5,1.5) {$1$};
\end{tikzpicture}
\end{array}
\\
\begin{array}{ccc}
\begin{tikzpicture}[thick,scale=0.5]
\filldraw[fill=black!50!white, draw=black] (1,0) -- (0,0) -- (0,1)--(1,1)--(1,0)--(0,0);
\filldraw[fill=black!50!white, draw=black] (1,1) -- (0,1) -- (0,2)--(1,2)--(1,1)--(0,1);
\filldraw[fill=black!20!white, draw=black] (2,0) -- (1,0) -- (1,1)--(2,1)--(2,0)--(1,0);
\draw(2,1) -- (1,1) -- (1,2)--(2,2)--(2,1)--(1,1);
\node at (0.5,-0.5) {$1$};
\node at (1.5,-0.5) {$2$};
\node at (2.5,0.5) {$2$};
\node at (2.5,1.5) {$1$};
\end{tikzpicture}
&
\begin{tikzpicture}[thick,scale=0.5]
\filldraw[fill=black!50!white, draw=black] (1,0) -- (0,0) -- (0,1)--(1,1)--(1,0)--(0,0);
\filldraw[fill=black!50!white, draw=black] (1,1) -- (0,1) -- (0,2)--(1,2)--(1,1)--(0,1);
\filldraw[fill=black!20!white, draw=black] (2,0) -- (1,0) -- (1,1)--(2,1)--(2,0)--(1,0);
\filldraw[fill=black!20!white, draw=black](2,1) -- (1,1) -- (1,2)--(2,2)--(2,1)--(1,1);
\node at (0.5,-0.5) {$1$};
\node at (1.5,-0.5) {$2$};
\node at (2.5,0.5) {$2$};
\node at (2.5,1.5) {$1$};
\end{tikzpicture}
\end{array}

\end{array}
\end{equation}

Let us recall how to interpret the diagrams in~\eqref{reps}. The smaller boxes corresponds to the tensor factors in $\pi_{F}=\pi_{\sigma}\circ\zeta=(\pi_{2}\otimes\pi_{1}\otimes\pi_{3}\otimes \pi_{2})\circ\zeta$ (where $\sigma\in S_{4}$ is the permutation~\eqref{sigma}) as follows:
$$
\begin{array}{ccc}
\begin{tikzpicture}[baseline=12,thick,scale=0.5]
\draw (1,0) -- (0,0) -- (0,1)--(1,1)--(1,0)--(0,0);
\draw (1,1) -- (0,1) -- (0,2)--(1,2)--(1,1)--(0,1);
\draw (2,0) -- (1,0) -- (1,1)--(2,1)--(2,0)--(1,0);
\draw (2,1) -- (1,1) -- (1,2)--(2,2)--(2,1)--(1,1);
\node at (0.5,-0.5) {$1$};
\node at (1.5,-0.5) {$2$};
\node at (2.5,0.5) {$2$};
\node at (2.5,1.5) {$1$};
\node at (0.5,0.5) {$1$};
\node at (0.5,1.5) {$2$};
\node at (1.5,0.5) {$3$};
\node at (1.5,1.5) {$4$};
\end{tikzpicture}
& \Longleftrightarrow &(\underset{1}{\pi_{2}}\otimes\underset{2}{\pi_{1}}\otimes\underset{3}{\pi_{3}}\otimes \underset{4}{\pi_{2}})\circ \zeta
\end{array}
$$ 
For $\varphi\in [0,2\pi),$ let $\tau_{\varphi}:C^{*}(S)\to \mathbb{C}$ be the composition of homomorphism $C^{*}(S)\to C(\mathbb{T})\to \mathbb{C}$, where the first arrow is the homomorphism extending $S\mapsto z$ (here $z\in C(\mathbb{T})$ is the coordinate function), and the second arrow is evaluation at $e^{i\varphi}$ (note that $\tau_{\varphi}(T_{12})=\tau_{\varphi}(T_{21})=0,$ $\tau_{\varphi}(T_{11})=e^{-i\varphi}$ and $\tau_{\varphi}(T_{22})=e^{i\varphi}$). The coloring of the box-diagrams is to be interpreted as follows:
\begin{itemize}
\item
If a box is dark gray, we apply $\tau_{0}$ onto the corresponding tensor factor,
\item
if a box is light gray, we apply $\tau_{\varphi}$ onto the corresponding tensor factor, 
\item
if the box is white, we do nothing to the corresponding factor. 
\end{itemize}
In this way every box diagram clearly corresponds to a family of representations of $\mathrm{Pol}(\mathrm{Mat}_{2})_{q}$, and it is a result from~\cite{gisel1} that there is a one to one correspondence between elements in these families and equivalence classes of irreducible representations of $\mathrm{Pol}(\mathrm{Mat}_{2})_{q}$ (see also the lists found in~\cite{turow} or~\cite{pro_tur}). In particular, the all-white box diagram is the Fock representation, and the box diagram with a light gray box in the upper left corner corresponds to the family of coherent representations. One of the main features of the box diagrams presentation is that they give a simple way of calculating the images of the generators under the representation using directed connected paths of hooks and arrows. We refer to Section 3.1 in~\cite{gisel1} for a detailed exposition. To make it easier for the reader, we write out the images of the generators under the Fock representation
\begin{equation}\label{fock}
\begin{array}{ccc}
\pi_{F}(z_{1}^{1})=d_{q}\otimes S^{*}C_{q}\otimes I\otimes d_{q}+(-q)^{-1}C_{q}S\otimes I\otimes S^{*}C_{q}\otimes C_{q}S, \\ \pi_{F}(z_{1}^{2})=C_{q}S\otimes I\otimes d_{q}\otimes I,\\
\pi_{F}(z_{2}^{1})=I\otimes I\otimes d_{q}\otimes C_{q}S, \\ \pi_{F}(z_{2}^{2})=I\otimes I \otimes C_{q}S\otimes I.
\end{array}
\end{equation}
To calculate the image of other representation, one simply apply $\tau_{\varphi}$ or $\tau_{0}$ onto the tensor factors according to the corresponding box-diagram,
\\

By taking quotients of tensor factors in~\eqref{reps}, and using appropriate values of $\varphi$ on the light gray boxes, one can present the inclusions of kernels of irreducible representations by the following diagram (here, an arrow $\pi\to \pi'$ means $\ker \pi'\subseteq \ker \pi$)
\begin{equation}\label{jacob}
\begin{tikzpicture}[node distance=3cm,rotate=90,transform shape]
 \node (Qf)                  {$\begin{tikzpicture}[thick,scale=0.5]
\draw (1,0) -- (0,0) -- (0,1)--(1,1)--(1,0)--(0,0);
\draw (1,1) -- (0,1) -- (0,2)--(1,2)--(1,1)--(0,1);
\draw (2,0) -- (1,0) -- (1,1)--(2,1)--(2,0)--(1,0);
\draw (2,1) -- (1,1) -- (1,2)--(2,2)--(2,1)--(1,1);
\end{tikzpicture}$};
\node (Qc) [below of=Qf] {\begin{tikzpicture}[thick,scale=0.5,rotate=-90,transform shape]
\draw (1,0) -- (0,0) -- (0,1)--(1,1)--(1,0)--(0,0);
\filldraw[fill=black!20!white, draw=black]  (1,1) -- (0,1) -- (0,2)--(1,2)--(1,1)--(0,1);
\draw (2,0) -- (1,0) -- (1,1)--(2,1)--(2,0)--(1,0);
\draw (2,1) -- (1,1) -- (1,2)--(2,2)--(2,1)--(1,1);
\end{tikzpicture}};
\node (Q12) [below left of=Qc] {\begin{tikzpicture}[thick,scale=0.5,rotate=-90,transform shape]
\filldraw[fill=black!20!white, draw=black](1,0) -- (0,0) -- (0,1)--(1,1)--(1,0)--(0,0);
\filldraw[fill=black!50!white, draw=black]  (1,1) -- (0,1) -- (0,2)--(1,2)--(1,1)--(0,1);
\draw (2,0) -- (1,0) -- (1,1)--(2,1)--(2,0)--(1,0);
\draw (2,1) -- (1,1) -- (1,2)--(2,2)--(2,1)--(1,1);
\end{tikzpicture}};
\node (Q21) [below right of=Qc] {\begin{tikzpicture}[thick,scale=0.5,rotate=-90,transform shape]
\draw (1,0) -- (0,0) -- (0,1)--(1,1)--(1,0)--(0,0);
\filldraw[fill=black!50!white, draw=black] (1,1) -- (0,1) -- (0,2)--(1,2)--(1,1)--(0,1);
\draw(2,0) -- (1,0) -- (1,1)--(2,1)--(2,0)--(1,0);
\filldraw[fill=black!20!white, draw=black] (2,1) -- (1,1) -- (1,2)--(2,2)--(2,1)--(1,1);
\end{tikzpicture}};
\node (Q10) [below of=Q12] {\begin{tikzpicture}[thick,scale=0.5,rotate=-90,transform shape]
\filldraw[fill=black!20!white, draw=black] (1,0) -- (0,0) -- (0,1)--(1,1)--(1,0)--(0,0);
\filldraw[fill=black!50!white, draw=black] (1,1) -- (0,1) -- (0,2)--(1,2)--(1,1)--(0,1);
\draw (2,0) -- (1,0) -- (1,1)--(2,1)--(2,0)--(1,0);
\filldraw[fill=black!20!white, draw=black](2,1) -- (1,1) -- (1,2)--(2,2)--(2,1)--(1,1);
\end{tikzpicture}};
\node (Q01) [below of=Q21] {\begin{tikzpicture}[thick,scale=0.5,rotate=-90,transform shape]
\filldraw[fill=black!50!white, draw=black] (1,0) -- (0,0) -- (0,1)--(1,1)--(1,0)--(0,0);
\filldraw[fill=black!50!white, draw=black] (1,1) -- (0,1) -- (0,2)--(1,2)--(1,1)--(0,1);
\filldraw[fill=black!20!white, draw=black] (2,0) -- (1,0) -- (1,1)--(2,1)--(2,0)--(1,0);
\draw(2,1) -- (1,1) -- (1,2)--(2,2)--(2,1)--(1,1);
\end{tikzpicture}};
\node (Q1) [below left of=Q01] {\begin{tikzpicture}[thick,scale=0.5,rotate=-90,transform shape]
\filldraw[fill=black!50!white, draw=black] (1,0) -- (0,0) -- (0,1)--(1,1)--(1,0)--(0,0);
\filldraw[fill=black!50!white, draw=black] (1,1) -- (0,1) -- (0,2)--(1,2)--(1,1)--(0,1);
\filldraw[fill=black!20!white, draw=black] (2,0) -- (1,0) -- (1,1)--(2,1)--(2,0)--(1,0);
\filldraw[fill=black!20!white, draw=black](2,1) -- (1,1) -- (1,2)--(2,2)--(2,1)--(1,1);
\end{tikzpicture}};
\draw[->, thick] (Qc) -- (Qf);
\draw[->, thick] (Q12) -- (Qc);
\draw[->, thick] (Q21) -- (Qc);
\draw[->, thick] (Q01) -- (Q21);
\draw[->, thick] (Q10) -- (Q21);
\draw[->, thick] (Q01) -- (Q12);
\draw[->, thick] (Q10) -- (Q12);
\draw[->, thick] (Q1) -- (Q01);
\draw[->, thick] (Q1) -- (Q10);
\end{tikzpicture}
\end{equation}
This gives an approximate picture of the Jacopson topology on the set of irreducible representations. It is not hard to see that for any irreducible representation $\pi$ belonging to one of the families in~\eqref{reps}, there exists irreducible representations from the other families such that $\pi$ can be fitted into a diagram like~\eqref{jacob}.

\section{Proof of the main result}
We will first show that the closure of $\mathrm{Pol}(\mathrm{Mat}_{2})_{q}$ under the coherent representation $\Omega_{\varphi}$ does not depend on $q.$ It is easy to the from the uniqueness of the coherent representation that
\begin{equation}\label{coriso}
\Omega_{\varphi}\cong \Omega_{0}\circ \Psi_{\varphi,0},
\end{equation}
so these $C^{*}$-algebras does not depend on the parameter $\varphi\in [0,2\pi).$ Hence, we need only to prove the result for
$$
B_{q}:=\overline{\Omega_{0}(\mathrm{Pol}(\mathrm{Mat}_{2})_{q})}
$$
Let \begin{equation}\label{x}x:=(I-z_{2}^{1}(z_{2}^{1})^{*}-z_{2}^{2}(z_{2}^{2})^{*})(I-z_{1}^{2}(z_{1}^{2})^{*}-z_{2}^{2}(z_{2}^{2})^{*}).\end{equation} Notice that we have the invariance
\begin{equation}\label{xinv}
\begin{array}{ccc}
\Psi_{\varphi_{1},\varphi_{2}}(x)=x, & \text{(for all $\varphi_{1},\varphi_{2}\in [0,2\pi)$)}.
\end{array}
\end{equation}
 We can calculate
$$
\Omega_{0}((I-z_{2}^{1}(z_{2}^{1})^{*}-z_{2}^{2}(z_{2}^{2})^{*}))=d_{q}^{2}\otimes d_{q}^{2}\otimes I
$$
$$
\Omega_{0}((I-z_{2}^{1}(z_{2}^{1})^{*}-z_{2}^{2}(z_{2}^{2})^{*}))=I\otimes d_{q}^{2}\otimes d_{q}^{2},
$$
where $d_{q}$ is given by~\eqref{Cd}. It follows that
\begin{equation}\label{omegax}
\Omega_{0}(x)= d_{q}^{2}\otimes d_{q}^{4}\otimes d_{q}^{2}.
\end{equation}
If we let $L\subseteq \mathrm{Pol}(\mathrm{Mat}_{2})_{q}$ be the ideal generated by $x,$ then as $\Omega_{0}$ is irreducible, we have 
$$
\overline{\Omega_{0}(L)}= \K\otimes \K\otimes \K
$$
Let $\h:=\ell^{2}(\mathbb{Z}_{+})^{\otimes 3}.$ We denote the compact operators on $\h$ by $\K$ (abusing the notation). It follows that $B_{q}/\K$ is a sub-algebra of $\mathcal Q(\h).$ Consider now the composition
$$
\Gamma_{q}:\mathrm{Pol}(\mathrm{Mat}_{2})_{q}\overset{\Omega_{0}}{\to} B_{q}\to B_{q}+\K.
$$ 
By~\eqref{coriso} and~\eqref{xinv}, it follows that $\Omega_{\varphi}(x)\neq 0$ for all $\varphi\in [0,2\pi),$ and since the Fock representation is faithful, also $\pi_{F}(x)\neq 0.$ But as $\Gamma_{q}(x)=0,$ it must be the case that neither the Fock representation, nor any of the coherent representations can be sub-summands of $\Gamma_{q}.$

It follows that only the $5$ families of irreducible representations

\begin{equation}\label{repsred}
\begin{array}{ccccc}
\begin{tikzpicture}[thick,scale=0.5]
\filldraw[fill=black!20!white, draw=black](1,0) -- (0,0) -- (0,1)--(1,1)--(1,0)--(0,0);
\filldraw[fill=black!50!white, draw=black]  (1,1) -- (0,1) -- (0,2)--(1,2)--(1,1)--(0,1);
\draw (2,0) -- (1,0) -- (1,1)--(2,1)--(2,0)--(1,0);
\draw (2,1) -- (1,1) -- (1,2)--(2,2)--(2,1)--(1,1);
\node at (0.5,-0.5) {$1$};
\node at (1.5,-0.5) {$2$};
\node at (2.5,0.5) {$2$};
\node at (2.5,1.5) {$1$};
\end{tikzpicture}
&
\begin{tikzpicture}[thick,scale=0.5]
\draw (1,0) -- (0,0) -- (0,1)--(1,1)--(1,0)--(0,0);
\filldraw[fill=black!50!white, draw=black] (1,1) -- (0,1) -- (0,2)--(1,2)--(1,1)--(0,1);
\draw(2,0) -- (1,0) -- (1,1)--(2,1)--(2,0)--(1,0);
\filldraw[fill=black!20!white, draw=black] (2,1) -- (1,1) -- (1,2)--(2,2)--(2,1)--(1,1);
\node at (0.5,-0.5) {$1$};
\node at (1.5,-0.5) {$2$};
\node at (2.5,0.5) {$2$};
\node at (2.5,1.5) {$1$};
\end{tikzpicture}
&
\begin{tikzpicture}[thick,scale=0.5]
\filldraw[fill=black!20!white, draw=black] (1,0) -- (0,0) -- (0,1)--(1,1)--(1,0)--(0,0);
\filldraw[fill=black!50!white, draw=black] (1,1) -- (0,1) -- (0,2)--(1,2)--(1,1)--(0,1);
\draw (2,0) -- (1,0) -- (1,1)--(2,1)--(2,0)--(1,0);
\filldraw[fill=black!20!white, draw=black](2,1) -- (1,1) -- (1,2)--(2,2)--(2,1)--(1,1);
\node at (0.5,-0.5) {$1$};
\node at (1.5,-0.5) {$2$};
\node at (2.5,0.5) {$2$};
\node at (2.5,1.5) {$1$};
\end{tikzpicture}
\end{array}
$$
$$
\begin{array}{ccc}
\begin{tikzpicture}[thick,scale=0.5]
\filldraw[fill=black!50!white, draw=black] (1,0) -- (0,0) -- (0,1)--(1,1)--(1,0)--(0,0);
\filldraw[fill=black!50!white, draw=black] (1,1) -- (0,1) -- (0,2)--(1,2)--(1,1)--(0,1);
\filldraw[fill=black!20!white, draw=black] (2,0) -- (1,0) -- (1,1)--(2,1)--(2,0)--(1,0);
\draw(2,1) -- (1,1) -- (1,2)--(2,2)--(2,1)--(1,1);
\node at (0.5,-0.5) {$1$};
\node at (1.5,-0.5) {$2$};
\node at (2.5,0.5) {$2$};
\node at (2.5,1.5) {$1$};
\end{tikzpicture}
&
\begin{tikzpicture}[thick,scale=0.5]
\filldraw[fill=black!50!white, draw=black] (1,0) -- (0,0) -- (0,1)--(1,1)--(1,0)--(0,0);
\filldraw[fill=black!50!white, draw=black] (1,1) -- (0,1) -- (0,2)--(1,2)--(1,1)--(0,1);
\filldraw[fill=black!20!white, draw=black] (2,0) -- (1,0) -- (1,1)--(2,1)--(2,0)--(1,0);
\filldraw[fill=black!20!white, draw=black](2,1) -- (1,1) -- (1,2)--(2,2)--(2,1)--(1,1);
\node at (0.5,-0.5) {$1$};
\node at (1.5,-0.5) {$2$};
\node at (2.5,0.5) {$2$};
\node at (2.5,1.5) {$1$};
\end{tikzpicture}
\end{array}
\end{equation}
can be found as sub-summands of $\Gamma_{q}.$ 
\\

Let $\Xi_{q},\Phi_{q}$ be the representations we get by taking the direct integral, with respect to $\varphi,$ of the irreducible families
\begin{equation}\label{repxi}
\begin{array}{ccc}
\begin{tikzpicture}[baseline=12,thick,scale=0.5]
\filldraw[fill=black!20!white, draw=black](1,0) -- (0,0) -- (0,1)--(1,1)--(1,0)--(0,0);
\filldraw[fill=black!50!white, draw=black]  (1,1) -- (0,1) -- (0,2)--(1,2)--(1,1)--(0,1);
\draw (2,0) -- (1,0) -- (1,1)--(2,1)--(2,0)--(1,0);
\draw (2,1) -- (1,1) -- (1,2)--(2,2)--(2,1)--(1,1);
\node at (0.5,-0.5) {$1$};
\node at (1.5,-0.5) {$2$};
\node at (2.5,0.5) {$2$};
\node at (2.5,1.5) {$1$};
\end{tikzpicture}
& \text{(to get $\Xi_{q}$)}
\end{array}
\end{equation}
\begin{equation}\label{repphi}
\begin{array}{cc}
\begin{tikzpicture}[baseline=12,thick,scale=0.5]
\draw (1,0) -- (0,0) -- (0,1)--(1,1)--(1,0)--(0,0);
\filldraw[fill=black!50!white, draw=black] (1,1) -- (0,1) -- (0,2)--(1,2)--(1,1)--(0,1);
\draw(2,0) -- (1,0) -- (1,1)--(2,1)--(2,0)--(1,0);
\filldraw[fill=black!20!white, draw=black] (2,1) -- (1,1) -- (1,2)--(2,2)--(2,1)--(1,1);
\node at (0.5,-0.5) {$1$};
\node at (1.5,-0.5) {$2$};
\node at (2.5,0.5) {$2$};
\node at (2.5,1.5) {$1$};
\end{tikzpicture}
& \text{(to get $\Phi_{q}$)}
\end{array}
\end{equation}
so that 
$$
\Xi_{q}:\mathrm{Pol}(\mathrm{Mat}_{2})_{q}\to C(\mathbb{T})\otimes C^{*}(S)\otimes C^{*}(S)\subseteq\mathcal B(\h_{1}),
$$
$$
\Phi_{q}:\mathrm{Pol}(\mathrm{Mat}_{2})_{q}\to C^{*}(S)\otimes C^{*}(S)\otimes C(\mathbb{T})\subseteq\mathcal B(\h_{2}),
$$
where $\h_{1}=L^{2}(\mathbb{T})\otimes \ell^{2}(\mathbb{Z}_{+})\otimes \ell^{2}(\mathbb{Z}_{+})$ and $\h_{2}=\ell^{2}(\mathbb{Z}_{+})\otimes \ell^{2}(\mathbb{Z}_{+})\otimes L^{2}(\mathbb{T}).$
Notice that the representations in~\eqref{repsred} are all quotients of either~\eqref{repxi} or~\eqref{repphi}. Hence, as for any $a\in\mathrm{Pol}(\mathrm{Mat}_{2})_{q}$ the norm of $(\Xi_{q}\oplus \Phi_{q})(a)$ is the supremum of the norms of $a$ under the families of $*$-representations~\eqref{repxi} and~\eqref{repphi}, and it follows that we have the inequality
\begin{equation}\label{norms}
\begin{array}{cc}
||\Gamma_{q}(a)||\leq ||(\Xi_{q}\oplus \Phi_{q})(a)||, & \text{for all $a\in\mathrm{Pol}(\mathrm{Mat}_{2})_{q}.$}
\end{array}
\end{equation}
It is not hard to verify that the two $C^{*}$-algebras $$\overline{\Xi_{q}(\mathrm{Pol}(\mathrm{Mat}_{2})_{q})}\subseteq\mathcal B(\h_{1}),$$ $$\overline{\Phi_{q}(\mathrm{Pol}(\mathrm{Mat}_{2})_{q})}\subseteq\mathcal B(\h_{2})$$ are both independent of $q$, but we omit the proof here. We prove instead:
\begin{prop}\label{prop}
The $C^{*}$-algebra 
 $$E=\overline{(\Xi_{q}\oplus \Phi_{q})(\mathrm{Pol}(\mathrm{Mat}_{2})_{q}})$$ does not depend on $q.$ Moreover, we have a family of injective $*$-homomorphisms $$\phi_{q}:E\to B_{q}/\K\subseteq\mathcal Q(\h)$$ varying point-norm continuously on $q\in (0,1).$ Hence, by $\fact$, the $C^{*}$-algebras $B_{q}$ are isomorphic for all $q.$
\end{prop}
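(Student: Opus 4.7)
The plan is to realize each $B_q$ as a $C^*$-extension
$$0\to \K \to B_q \to B_q/\K \to 0$$
whose quotient $B_q/\K$ is $q$-independent and whose Busby invariant $\phi_q\colon E\to \mathcal Q(\h)$ varies point-norm continuously in $q$; then the quoted $\fact$ (a BDF-type principle for continuous families of Busby invariants) concludes that all $B_q$ are isomorphic. The first step is to write out $\Xi_q(z_i^j)$ and $\Phi_q(z_i^j)$ explicitly, by feeding the Fock formulas~\eqref{fock} through the box-diagram recipe: apply $\tau_\varphi$ to the tensor factor marked light-gray and $\tau_0$ to the one marked dark-gray. Since $\tau_\varphi(T_{11})=e^{-i\varphi}$, $\tau_\varphi(T_{22})=e^{i\varphi}$ and $\tau_\varphi(T_{12})=\tau_\varphi(T_{21})=0$, most terms of $\pi_F$ collapse and one obtains explicit generators of $\Xi_q(\mathrm{Pol}(\mathrm{Mat}_2)_q)\subseteq C(\mathbb{T}) \otimes C^*(S) \otimes C^*(S)$ and of $\Phi_q(\mathrm{Pol}(\mathrm{Mat}_2)_q)\subseteq C^*(S) \otimes C^*(S) \otimes C(\mathbb{T})$; these ambient algebras are manifestly $q$-independent, and the only $q$-dependence sits in the specific elements $d_q, C_qS, S^*C_q \in C^*(S)$.

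Showing that the closure $E=E_q$ does not depend on $q$ is, I expect, the main obstacle. My approach would be to produce a $q$-independent presentation of $E_q$ by generators and relations: list enough commutation, adjoint and spectral identities satisfied by the explicit formulas from the previous step (all of which are consequences of $d_q^2+C_q^2=I$ and the Toeplitz relations $S^*S=I$, $SS^*=I-P_0$ in the $q$-independent algebra $C^*(S)$, together with a check that the sub-$C^*$-algebra of $C^*(S)$ generated by $d_q, C_qS, S^*C_q$ is all of $C^*(S)$) to pin down a universal $C^*$-algebra, and then verify that each $E_q$ realizes this universal property. A more hands-on variant, in the spirit of Nagy's argument for $SU_q(3)$, is to construct, for fixed $q_0$ and varying $q$, explicit $*$-isomorphisms $E_{q_0}\to E_q$ identifying corresponding generators and then verify bijectivity on a faithful representation.

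Granting $q$-independence of $E$, the map $\phi_q\colon E\to B_q/\K$ is forced by the norm inequality~\eqref{norms} and surjective by density. Injectivity is the reverse inequality $\|(\Xi_q\oplus \Phi_q)(a)\|\le \|\Gamma_q(a)\|$ for $a\in \mathrm{Pol}(\mathrm{Mat}_2)_q$, which holds because each irreducible component $\Xi_q^{\varphi}$ or $\Phi_q^{\varphi}$ of $\Xi_q\oplus\Phi_q$ belongs to the first or second family in~\eqref{repsred}; a short calculation using $d_q^2+C_q^2=I$ and $C_qe_0=0$ shows that every such component annihilates the element $x$ of~\eqref{x} and hence factors through $B_q/\K$, so that $\|\Xi_q^{\varphi}(a)\|,\|\Phi_q^{\varphi}(a)\|\le \|\Gamma_q(a)\|$, and taking the supremum over $\varphi$ yields the claim. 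Point-norm continuity of $\phi_q$ then reduces, via density of $(\Xi_q\oplus \Phi_q)(\mathrm{Pol}(\mathrm{Mat}_2)_q)$ in $E$ and the identifications of the previous paragraph, to norm-continuity in $q$ of $\Omega_0(z_i^j)\in B_q$, which is immediate from the explicit formulas for $\pi_F$ since $d_q$ and $C_q$ depend norm-continuously on $q$. A point-norm continuous family of injective $*$-homomorphisms $\phi_q\colon E\to \mathcal Q(\h)$ then yields, via $\fact$, the isomorphism of the extensions $B_q=\pi^{-1}(\phi_q(E))$ pulled back along the Calkin quotient $\pi\colon \mathcal B(\h)\to \mathcal Q(\h)$, completing the proof.
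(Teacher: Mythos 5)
Your high-level architecture --- realize $B_q$ as an extension of a $q$-independent quotient by $\K$, produce a point-norm continuous family of injective Busby-type maps $\phi_q:E\to\mathcal Q(\h)$, and invoke the quoted $\fact$ --- is exactly the paper's, and your treatment of $\phi_q$ (well-definedness from~\eqref{norms}, surjectivity by density, continuity via norm-continuity of $q\mapsto d_q, C_q$) matches the paper in outline. One caveat there: your injectivity argument ("each component annihilates $x$, hence factors through $B_q/\K$") is incomplete as stated, since annihilating $x$ only kills the ideal $L\subseteq\mathrm{Pol}(\mathrm{Mat}_2)_q$; to factor through $B_q/\K$ you first need each component to factor through $B_q=\overline{\Omega_0(\mathrm{Pol}(\mathrm{Mat}_2)_q)}$ at all, which is the kernel-inclusion information encoded in diagram~\eqref{jacob}. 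The paper instead exhibits the concrete ideals $J_\Xi=B_q\cap(C^*(S)\otimes C^*(S)\otimes\K)$ and $J_\Phi=B_q\cap(\K\otimes C^*(S)\otimes C^*(S))$, both containing $\K$, through which $\Xi_q$ and $\Phi_q$ factor.

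The genuine gap is the first assertion, which you yourself flag as "the main obstacle" and then do not carry out: you offer two unexecuted strategies (a universal presentation of $E$ by generators and relations, or Nagy-style isomorphisms $E_{q_0}\to E_q$) rather than a proof. Neither is routine --- $E$ is a proper subalgebra of $\bigl(C(\mathbb{T})\otimes C^*(S)^{\otimes 2}\bigr)\oplus\bigl(C^*(S)^{\otimes 2}\otimes C(\mathbb{T})\bigr)$ generated by four specific direct sums, so knowing that $d_q$, $C_qS$, $S^*C_q$ generate $C^*(S)$ does not pin it down, and writing a complete set of relations would be a substantial separate project. The paper's actual mechanism is different and concrete: rescale $z_1^1$ by $-q$, observe that the images~\eqref{im22}--\eqref{im11} then converge as $q\to 0$ to the $q$-independent operators $Z_i^j$ of~\eqref{lim22}--\eqref{lim11}, and prove by explicit operator identities that for every fixed $q$ the $C^*$-algebra generated by the images equals the one generated by the $Z_i^j$; the nontrivial direction rests on recovering $(I\otimes I\otimes d_q^2)\oplus(d_q^2\otimes I\otimes I)$ as the norm-convergent series $\sum_{k\ge 0}q^{2k}(Z_1^1Z_2^2)^k\bigl(I-(Z_1^1Z_2^2)(Z_1^1Z_2^2)^{*}\bigr)(Z_1^1Z_2^2)^{*k}$ and then a left-inverse of $Z_2^2$. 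This rescaling is also load-bearing in the continuity argument (the polynomial $J(Z_q)$ of~\eqref{jpol} is taken in $qz_1^1$, not $z_1^1$), a point your sketch omits. Without some version of this computation, the central claim that $E$ does not depend on $q$ remains unproved.
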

\begin{proof}
Essentially, the proof is to show that we can let $q\to 0$ on the generators and still get the same $C^{*}$-algebra. If we calculate the image of the generators $\{z_{m}^{j}\}_{m,j=1}^{2}\subseteq \mathrm{Pol}(\mathrm{Mat}_{2})_{q}$ under $\Xi_{q}\oplus \Phi_{q},$ we get 

\begin{gather}
(\Xi_{q}\oplus \Phi_{q})(z_{2}^{2})=(I\otimes C_{q}S\otimes I)\oplus (I\otimes C_{q}S\otimes I)\label{im22} \\ (\Xi_{q}\oplus \Phi_{q})(z_{1}^{2})=(z\otimes d_{q}\otimes I)\oplus (C_{q}S\otimes d_{q}\otimes I)\label{im12}\\
(\Xi_{q}\oplus \Phi_{q})(z_{2}^{1})=(I\otimes d_{q}\otimes C_{q}S)\oplus (I\otimes d_{q}\otimes z)\label{im21} \\ (\Xi_{q}\oplus \Phi_{q})(z_{1}^{1})=-q^{-1}(z\otimes S^{*}C_{q}\otimes C_{q}S)\oplus (C_{q}S\otimes S^{*}C_{q}\otimes z). \label{im11}
\end{gather}

If we multiply~\eqref{im11} by $-q,$ then all the images above has well-defined limits as $q\to 0.$ The limits of~\eqref{im22}-\eqref{im11} are respectively

\begin{gather}
Z_{2}^{2}:=(I\otimes S\otimes I)\oplus (I\otimes S\otimes I)\label{lim22}\\
Z_{1}^{2}:=(z\otimes P\otimes I)\oplus (S\otimes P\otimes I)\label{lim12}\\
Z_{2}^{1}:=(I\otimes P\otimes S)\oplus (I\otimes P\otimes z)\label{lim21}\\
Z_{1}^{1}:=(z\otimes S^{*}\otimes S)\oplus (S\otimes S^{*}\otimes z).\label{lim11}
\end{gather}
It is easy to see that the right-hand sides of~\eqref{lim22}-\eqref{lim21} are in the $C^{*}$-algebra generated by~\eqref{im22}-\eqref{im21} and vice versa. This leaves us to show the similar statement for~\eqref{im11} and~\eqref{lim11}. However, it is easy to see that $Z_{1}^{1}$ is in the $C^{*}$-algebra generated by the images of $z_{1}^{1}$ and $z_{2}^{2}.$ To show that we can get the right-hand side of~\eqref{im11} from~\eqref{lim22}-\eqref{lim11} we first notice that a left-inverse of $(I\otimes C_{q}S\otimes I)\oplus (I\otimes C_{q}S\otimes I)$ is in the $C^{*}$-algebra generated by~\eqref{lim22} and if we multiply this operator on the right of the right-hand side of~\eqref{im11} we see that we only need to prove that the operator 
$$
(z\otimes I\otimes C_{q}S)\oplus (C_{q}S\otimes I\otimes z)
$$
is in the $C^{*}-$algebra generated by~\eqref{lim22}-\eqref{lim11}. Actually, we only need to prove that
$$
(I\otimes I\otimes C_{q}^{2})\oplus (C_{q}^{2}\otimes I\otimes I)=(I\otimes I\otimes (I-d_{q}^{2}))\oplus ((I-d_{q}^{2})\otimes I\otimes I)=
$$
$$
(I\otimes I\otimes I)\oplus (I\otimes I\otimes I)-(I\otimes I\otimes d_{q}^{2})\oplus (d_{q}^{2}\otimes I\otimes I)
$$
is in this $C^{*}$-algebra. If we consider $Z_{1}^{1}Z_{2}^{2}=(z\otimes I\otimes S)\oplus (S\otimes I\otimes z),$ then 
$$
I-(Z_{1}^{1}Z_{2}^{2})(Z_{1}^{1}Z_{2}^{2})^{*}=(I\otimes I\otimes P)\oplus (P\otimes I\otimes I)
$$
and thus
$$
 (I\otimes I\otimes d_{q}^{2})\oplus(d_{q}^{2}\otimes I\otimes I)=\sum_{k=0}^{\infty}q^{2k}(Z_{1}^{1}Z_{2}^{2})^{k}(I-(Z_{1}^{1}Z_{2}^{2})(Z_{1}^{1}Z_{2}^{2})^{*})(Z_{1}^{1}Z_{2}^{2})^{*k}
$$
and this gives the first claim.
\\

Let us prove the existence and injectivity of the maps $\phi_{q}:E\to\mathcal Q(\h).$ We do this by establishing a isomorphism $E\cong B_{q}/\K.$ Firstly, we do have a homomorphism $B_{q}\to E$ as it is easy to see from~\eqref{reps} that both $\Xi_{q}$ and $\Phi_{q}$ can be constructed from $\Omega_{q}$ by taking the quotients of certain ideals: in the case of $\Xi_{q},$ we take quotient of $B_{q}$ with the ideal $J_{\Phi}:=B_{q}\cap (\K\otimes C^*(S)\otimes C^*(S))$ and in the case of $\Phi_{q}$ we use the ideal $J_{\Xi}:=B_{q}\cap (C^*(S)\otimes C^*(S)\otimes \K),$ so that $\Xi_{q}$ and $\Phi_{q}$ respectively factors as
$$
\mathrm{Pol}(\mathrm{Mat}_{2})_{q}\overset{\Omega_{0}}{\to } B_{q}\overset{p_{\Xi}}{\to} B_{q}/J_{\Xi} 
$$
$$
\mathrm{Pol}(\mathrm{Mat}_{2})_{q}\overset{\Omega_{0}}{\to } B_{q}\overset{p_{\Phi}}{\to}  B_{q}/J_{\Phi} 
$$
where $p_{\Xi}$ and $p_{\Phi}$ are the quotient maps. It thus follows that we have a surjective homomorphism 
$$
p_{\Xi}\oplus p_{\Phi}:B_{q}\to E
$$
such that $\Xi_{q}\oplus \Phi_{q}=(p_{\Xi}\oplus p_{\Phi})\circ \Omega_{0}.$ Moreover, we have $\K\subseteq J_{\Xi}\cap J_{\Phi}$ and hence there are surjective homomorphisms
$$
\begin{array}{cc}
\psi_{q}: B_{q}/\K\to E, & q\in (0,1).
\end{array}
$$
such that $\Xi_{q}\oplus \Phi_{q}$ factors as 
$$
\mathrm{Pol}(\mathrm{Mat}_{2})_{q}\overset{\Omega_{0}}{\to } B_{q}\to B_{q}/\K \overset{\psi_{q}}{\to} E
$$
i.e $\Xi_{q}\oplus \Phi_{q}=\psi_{q}\circ \Gamma_{q}.$ We claim that $\psi_{q}$ is actually isometric. To show this, we only need to prove that the map 
\begin{equation}\label{psiq} \begin{array}{cc} \Gamma_{q}(a)\mapsto (\Xi_{q}\oplus \Phi_{q})(a), & a\in\mathrm{Pol}(\mathrm{Mat}_{2})_{q}\end{array}\end{equation} is an isometry. This map is certainly a contraction, as $\psi_{q}$ is. The opposite inequality follows from~\eqref{norms}. Now let $\phi_{q}:=\psi_{q}^{-1}.$
\\

We must show that the homomorphisms $\phi_{q},$ $q\in(0,1)$ are varying norm-continuously on $q,$ i.e that for any fixed element $a\in E$ and $q\in (0,1)$ and $\epsilon>0,$ there is a $\delta>0$ such that
$$
\begin{array}{cc}
||\phi_{q}(a)-\phi_{s}(a)||<\epsilon, & \text{for $s\in(0,1)$ such that $|q-s|<\delta.$}
\end{array}
$$
We can assume that $a\in E$ is in the image of $\mathrm{Pol}(\mathrm{Mat}_{2})_{q}.$ Consider now a polynomial function $$J(x_{1},x_{2},x_{3},x_{4},x_{1}^{*},x_{2}^{*},x_{3}^{*},x_{4}^{*})$$ in the non-commutative variables $\{x_{1},x_{2},x_{3},x_{4},x_{1}^{*},x_{2}^{*},x_{3}^{*},x_{4}^{*}\},$ let
 \begin{equation}\label{jpol}
J(Z_{q}):=J((q z_{1}^{1}),z_{2}^{1},z_{1}^{2},z_{2}^{2},(q z_{1}^{1})^{*},(z_{2}^{1})^{*},(z_{1}^{2})^{*},(z_{2}^{2})^{*}).
\end{equation}
There exists such polynomial $J$ such that $(\Xi_{q}\oplus \Phi_{q})(J(Z_{q}))=a.$ To emphasize the dependence on $q\in(0,1),$ let us write $\Omega_{0}^{(q)}$ for the coherent representation of $\mathrm{Pol}(\mathrm{Mat}_{2})_{q}.$ It is easy to see from the actions of the representations $\Omega_{0}^{(q)}$ and $\Xi_{q}\oplus \Phi_{q}$ that there is a $\delta>0$ such that 
$$
\begin{array}{ccc}
||\Omega_{0}^{(s)}(J(Z_{s}))-\Omega_{0}^{(q)}(J(Z_{q}))||<\frac{\epsilon}{2},\\ ||(\Xi_{s}\oplus \Phi_{s})(J(Z_{s}))-(\Xi_{q}\oplus \Phi_{q})(J(Z_{q}))||<\frac{\epsilon}{2},&
\end{array}
$$ 
for $|q-s|<\delta$, and thus, since $\phi_{s}$ are contractions and $$\phi_{s}((\Xi_{s}\oplus \Phi_{s})(J(Z_{s})))=\Omega_{0}^{(s)}(J(Z_{s}))+\K,$$ we get for $|q-s|<\delta$
$$
||\phi_{s}(a)-\phi_{q}(a)||=
$$
$$
=||\Omega_{0}^{(s)}(J(Z_{s}))-\Omega_{0}^{(q)}(J(Z_{q}))+\phi_{s}((\Xi_{q}\oplus \Phi_{q})(J(Z_{q}))-(\Xi_{s}\oplus \Phi_{s})(J(Z_{s})))+\K||\leq
$$
$$
\leq ||\Omega_{0}^{(s)}(J(Z_{s}))-\Omega_{0}^{(q)}(J(Z_{q}))||+||(\Xi_{q}\oplus \Phi_{q})(J(Z_{q}))-(\Xi_{s}\oplus \Phi_{s})(J(Z_{s}))||<\frac{\epsilon}{2}+\frac{\epsilon}{2}.
$$
We can now apply the $\fact$ to get that the $C^{*}$-algebras $B_{q}/\K+\K =B_{q}$ are isomorphic for all $q\in (0,1).$
\end{proof}
\begin{cor}
for every $q\in (0,1)$ the $C^{*}$-algebra $B_{q}$ is isomorphic to the $C^{*}$-algebra $B_{0}\subseteq C^{*}(S)\otimes C^*(S)\otimes C^*(S)$ generated by the operators 
\begin{equation}\label{b0gen}
\begin{array}{ccc}
S\otimes S^{*}\otimes S, & I\otimes P\otimes S,\\
S\otimes P\otimes I, & I\otimes S\otimes I.
\end{array}
\end{equation}
\end{cor}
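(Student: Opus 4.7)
The plan is to show that $B_{0}$ fits into an extended version of the norm-continuous family $\{\phi_{q}\}_{q\in(0,1)}$ of Proposition~\ref{prop}, so that $\fact$ can be applied once more to conclude $B_{0}\cong B_{q}$. The key observation is that the four generators of $B_{0}$ in~\eqref{b0gen} arise as norm limits, as $q\to 0$, of (suitably rescaled) images of the generators of $\mathrm{Pol}(\mathrm{Mat}_{2})_{q}$ under the coherent representation $\Omega_{0}^{(q)}:\mathrm{Pol}(\mathrm{Mat}_{2})_{q}\to\mathcal{B}(\h)$, where $\h=\ell^{2}(\mathbb{Z}_{+})^{\otimes 3}$.

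First I would compute the images $\Omega_{0}^{(q)}(z_{j}^{k})$ directly by applying $\tau_{0}$ to the upper-left tensor factor of the Fock representation~\eqref{fock}, obtaining
\begin{gather*}
\Omega_{0}^{(q)}(z_{2}^{2})=I\otimes C_{q}S\otimes I,\quad \Omega_{0}^{(q)}(z_{1}^{2})=C_{q}S\otimes d_{q}\otimes I,\quad \Omega_{0}^{(q)}(z_{2}^{1})=I\otimes d_{q}\otimes C_{q}S,\\ \Omega_{0}^{(q)}(z_{1}^{1})=d_{q}\otimes I\otimes d_{q}+(-q)^{-1}C_{q}S\otimes S^{*}C_{q}\otimes C_{q}S.
\end{gather*}
Since $\|d_{q}-P\|=O(q)$ and $\|C_{q}-(I-P)\|=O(q^{2})$ in operator norm, and because $(I-P)S=S$ and $S^{*}(I-P)=S^{*}$, the four operators $\Omega_{0}^{(q)}(z_{2}^{2}),\ \Omega_{0}^{(q)}(z_{1}^{2}),\ \Omega_{0}^{(q)}(z_{2}^{1}),\ \Omega_{0}^{(q)}(-q z_{1}^{1})$ converge in operator norm, as $q\to 0$, to $I\otimes S\otimes I,\ S\otimes P\otimes I,\ I\otimes P\otimes S,\ S\otimes S^{*}\otimes S$ respectively, i.e.\ to the four generators of $B_{0}$ from~\eqref{b0gen}. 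Consequently, for any noncommutative polynomial $J$ in eight variables, $\|\Omega_{0}^{(q)}(J(Z_{q}))-J(Z_{0})\|\to 0$ as $q\to 0$, where $J(Z_{0})$ denotes the corresponding polynomial in the generators of $B_{0}$.

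Next, I would replicate the quotient-algebra construction of Proposition~\ref{prop} at $q=0$ by defining a surjective $*$-homomorphism $\alpha:B_{0}\to E$ as the direct sum $(p\otimes\id\otimes\id)\oplus(\id\otimes\id\otimes p)$ of two Toeplitz quotient maps $p:C^{*}(S)\to C(\mathbb{T}),\ S\mapsto z$. On the four generators of $B_{0}$ the map $\alpha$ returns exactly the generators $Z_{j}^{k}$ of $E$ listed in~\eqref{lim22}--\eqref{lim11}, so $\alpha$ is surjective onto $E$. Identifying $\ker\alpha=\K(\h)$ then yields an isomorphism $B_{0}/\K\cong E$, and we set $\phi_{0}:=\alpha^{-1}$ (modulo compacts). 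Together with the convergence of the previous paragraph, this produces a point-norm continuous family $\{\phi_{q}:E\to B_{q}/\K\}_{q\in[0,1)}$, so that a second application of $\fact$ delivers $B_{q}\cong B_{0}$ for all $q\in(0,1)$.

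The main obstacle is the kernel identification $\ker\alpha=\K(\h)$. For the inclusion $\K(\h)\subseteq\ker\alpha$ one needs $\K(\h)\subseteq B_{0}$, which reduces to producing a single rank-one projection in $B_{0}$ -- for instance $P\otimes P\otimes P$, which can be obtained from $I\otimes P\otimes I=I-(I\otimes S\otimes I)(I\otimes S\otimes I)^{*}$ followed by suitable products involving $I\otimes P\otimes S$ and $S\otimes P\otimes I$ to trim the outer factors to $P$ -- combined with the fact that $B_{0}$ acts irreducibly on $\h$. The reverse inclusion $\ker\alpha\subseteq\K(\h)$, equivalently $B_{0}\cap(\K\otimes C^{*}(S)\otimes\K)=\K(\h)$, is the more delicate structural fact and is the true crux of the argument.
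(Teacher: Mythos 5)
Your computation of the norm limits of $\Omega_{0}^{(q)}(z_{2}^{2})$, $\Omega_{0}^{(q)}(z_{1}^{2})$, $\Omega_{0}^{(q)}(z_{2}^{1})$ and $\Omega_{0}^{(q)}(-qz_{1}^{1})$ is correct and is exactly the paper's starting point (its display~\eqref{limb0}). The gap is in how you produce the injective map $\phi_{0}:E\to B_{0}/\K$. You define $\alpha=(p\otimes\id\otimes\id)\oplus(\id\otimes\id\otimes p)$ on $B_{0}$, observe that it carries the four generators of $B_{0}$ onto the generators $Z_{j}^{k}$ of $E$, and then need the kernel identification $\ker(\alpha|_{B_{0}})=B_{0}\cap(\K\otimes C^{*}(S)\otimes\K)=\K(\h)$ in order to invert $\alpha$ modulo the compacts. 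You explicitly flag this identification as the ``true crux'' and offer no argument for the hard inclusion $B_{0}\cap(\K\otimes C^{*}(S)\otimes\K)\subseteq\K(\h)$; as written, the proposal is therefore incomplete at precisely the point where the real content lies. (The easy half, $\K\subseteq B_{0}$ via a rank-one projection together with irreducibility, is fine.)

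The paper avoids this kernel computation entirely. It already has, from Proposition~\ref{prop}, the isometric $*$-homomorphisms $\phi_{q}:E\to B_{q}/\K\subseteq\mathcal Q(\h)$ for $q\in(0,1)$, and it shows by an $\epsilon/3$ estimate --- using exactly your norm limits together with~\eqref{lim22}--\eqref{lim11} --- that $q\mapsto\phi_{q}(x)$ is Cauchy as $q\to 0$ for each $x\in E$. The pointwise limit $\phi_{0}$ is then automatically isometric, hence injective, since $\|\phi_{0}(x)\|=\lim_{q\to 0}\|\phi_{q}(x)\|=\|x\|$, and its image is identified on generators as $B_{0}/\K$ using only the easy inclusion $\K\subseteq B_{0}$. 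In other words, the injectivity you are missing is obtained for free as a limit of isometries; to salvage your route you would have to actually prove $B_{0}\cap(\K\otimes C^{*}(S)\otimes\K)=\K\otimes\K\otimes\K$, a nontrivial structural statement about $B_{0}$ that does not follow from inspecting the generators alone. I recommend replacing the quotient-map construction of $\phi_{0}$ by the limit construction.
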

\begin{proof}
We argue first that the limit $\phi_{0}:=\lim_{q\to 0}\phi_{q}:E\to\mathcal Q(\h)$ exists in the topology of point-wise convergence. For every non-commutative polynomial $J$ like~\eqref{jpol}, both $\Omega_{0}^{(q)}(J(Z_{q}))$ and $(\Xi_{q}\oplus \Phi_{q})(J(Z_{q}))$ converges to limits as $q\to 0$ (we write again $\Omega_{0}^{(q)}$ to emphasize dependence on $q$). In fact, this follows from that we have convergence
\begin{equation}\label{limb0}
\begin{array}{ccc}
\lim_{q\to 0}\Omega_{0}^{(q)}(qz_{1}^{1})=S\otimes S^*\otimes S, & \lim_{q\to 0}\Omega_{0}^{(q)}(z_{2}^{1})=S\otimes P\otimes I,\\
 \lim_{q\to 0}\Omega_{0}^{(q)}(z_{1}^{2})=I\otimes P\otimes S, &  \lim_{q\to 0}\Omega_{0}^{(q)}(z_{2}^{2})=I\otimes S\otimes I
\end{array}
\end{equation}
as well as the limits from~\eqref{lim22}-\eqref{lim11}, and this shows the convergence of $\lim_{q\to 0}\phi_{q}((\Xi_{q}\oplus \Phi_{q})(J(Z_{q})))=\lim_{q\to 0}\Omega_{0}^{(q)}(J(Z_{q}))+\K$ for such $J.$ As the elements~\eqref{lim22}-\eqref{lim11} generates $E,$ there is for any $x\in E$ a non-commutative polynomial $J,$ such that
$$
\lim_{q\to 0}||x-(\Xi_{q}\oplus\Phi_{q})(J(Z_{q}))||<\frac{\epsilon}{2}
$$ 
and hence, we have for $q,s\in (0,1)$ small enough
$$
||\phi_{q}(x)-\phi_{s}(x)||\leq
$$

$$
\leq||\phi_{q}(x-(\Xi_{q}\oplus \Phi_{q})(J(Z_{q})))||+ ||\phi_{s}(x-(\Xi_{s}\oplus \Phi_{s})(J(Z_{s})))||+
$$

$$
+||\phi_{q}\circ(\Xi_{q}\oplus \Phi_{q})(J(Z_{q}))-\phi_{s}\circ(\Xi_{s}\oplus \Phi_{s})(J(Z_{s}))||=
$$

$$
=||\phi_{q}(x-(\Xi_{q}\oplus \Phi_{q})(J(Z_{q})))||+||\phi_{s}(x-(\Xi_{s}\oplus \Phi_{s})(J(Z_{s})))|| +
$$

$$
+||\Omega_{0}^{(q)}(J(Z_{q}))-\Omega_{0}^{(s)}(J(Z_{s}))+\K||\leq
$$
$$
\leq||x-(\Xi_{q}\oplus \Phi_{q})(J(Z_{q}))||+||x-(\Xi_{s}\oplus \Phi_{s})(J(Z_{s}))||+
$$
$$
+||(\Omega^{(q)}_{0}(J(Z_{q}))-\Omega_{0}^{(s)}(J(Z_{s}))||\leq \epsilon.
$$
Thus $\phi_{0}:E\to \mathcal Q(\h)$ exists and maps 
\begin{equation}\label{phi0}
\begin{array}{ccc}
\phi_{0}(Z_{1}^{1})=S\otimes S^{*}\otimes S+\K, & \phi_{0}(Z_{1}^{2})=S\otimes P\otimes I+\K\\
 \phi_{0}(Z_{1}^{2})=I\otimes P\otimes S+\K, &  \phi_{0}(Z_{2}^{2})=I\otimes S\otimes I+\K.
\end{array}
\end{equation}
with $Z_{i}^{j}$ as~\eqref{lim11}-\eqref{lim22}. Moreover, the point-wise limit of injective (hence isometric) $*$-homomorphisms must be still injective, as $||\phi_{0}(x)||=\lim_{q\to 0}||\phi_{q}(x)||=\lim_{q\to 0}||x||=||x||$ for all $x\in E.$ It is easy to see that $\K\subseteq B_{0}$ and thus from~\eqref{phi0} it follows that $\phi_{0}(E)+\K=B_{0}.$ The corollary now follows from $\fact$.
\end{proof}
It is not hard to see that the $*$-representation $\Pi_{q}$ from {\bf\textsc{Example}} is equivalent to the representation determined as 
\begin{equation}\label{piq}
\begin{array}{ccc}
z_{m}^{j}\mapsto z\otimes \Omega_{0}(z_{m}^{j})\in C(\mathbb{T})\otimes B_{q}\subseteq C(\mathbb{T})\otimes C^{*}(S)^{\otimes 3}, & \text{(for $j,m=1,2$)}
\end{array}
\end{equation}
and henceforth this will be our definition of $\Pi_{q}.$ It thus follows that $A_{q},$ defined as the closure of the range of $\Pi_{q},$ is a sub-algebra of $C(\mathbb{T})\otimes B_{q}.$
\begin{lem}\label{inclusion}
For every $q\in (0,1),$ we have the inclusions
\begin{equation}\label{inc}
C(\mathbb{T})\otimes \K\subseteq A_{q}\subseteq C(\mathbb{T})\otimes B_{q}
\end{equation}
\end{lem}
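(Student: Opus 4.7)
The right inclusion $A_q \subseteq C(\mathbb{T}) \otimes B_q$ is immediate: each $\Pi_q(z_m^j) = z \otimes \Omega_0(z_m^j)$ lies in the $C^*$-algebra $C(\mathbb{T}) \otimes B_q$, so the $C^*$-subalgebra they generate does as well.

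For the left inclusion my plan is to exploit the $\mathbb{Z}$-grading on $\mathrm{Pol}(\mathrm{Mat}_2)_q$ determined by $\deg(z_m^j) = 1$ and $\deg((z_m^j)^*) = -1$ (respected by all defining relations); let $V_n$ denote the degree-$n$ homogeneous component. Let $\beta_\theta$ denote the action of rotation in the $C(\mathbb{T})$ factor of $C(\mathbb{T}) \otimes B_q$. Then $\beta_\theta \circ \Pi_q = \Pi_q \circ \Psi_{\theta,\theta}$, so $A_q$ is $\beta$-invariant, and the standard averaging projections exhibit $A_q$ as the closed linear span of its spectral subspaces $A_q^{(n)} = z^n \otimes D_n$, where $D_n := \overline{\Omega_0(V_n)}$. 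Since $C(\mathbb{T}) \otimes \K$ is itself the closed linear span of $\{z^n \otimes k : n \in \mathbb{Z},\, k \in \K\}$, it suffices to prove $\K \subseteq D_n$ for every $n$.

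The crux is to establish $\K \subseteq D_0$. Using the defining coherent-state relations $\Omega_0((z_m^j)^*) v = 0$ for $(m,j) \neq (1,1)$ and $\Omega_0(z_1^1) v = \Omega_0((z_1^1)^*) v = v$, with $v = e_0 \otimes e_0 \otimes e_0$, a direct computation yields $\Omega_0(x) v = v$. Since $\Omega_0(x) = d_q^2 \otimes d_q^4 \otimes d_q^2$ acts diagonally with eigenvalues $q^{2a + 4b + 2c}$ on $e_a \otimes e_b \otimes e_c$, the powers $\Omega_0(x^n)$ converge in operator norm to the rank-one projection $P_v := |v\rangle\langle v|$, placing $P_v$ in $D_0$. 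Next, I claim $\Omega_0(V_0) v$ is dense in $\h$: for any $a \in V_n$, right-multiplying $a$ by $((z_1^1)^*)^n$ (if $n \geq 0$) or by $(z_1^1)^{|n|}$ (if $n < 0$) yields an element of $V_0$ whose image sends $v$ to $\Omega_0(a) v$, so $\Omega_0(V_0) v \supseteq \Omega_0(\mathrm{Pol}(\mathrm{Mat}_2)_q) v$, which is dense by cyclicity of $v$. Consequently $\Omega_0(a) P_v = |\Omega_0(a) v\rangle\langle v|$ for $a \in V_0$ ranges over a dense subset of $\{|u\rangle\langle v| : u \in \h\} \subseteq D_0$; taking adjoints and products (using $\|v\|=1$) yields every rank-one operator, hence $\K \subseteq D_0$.

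For $n \neq 0$, the closed subspace $D_n$ is a $D_0$-bimodule (because $V_0 V_n \subseteq V_n$) and contains the nonzero operator $A_n := \Omega_0((z_1^1)^n)$ for $n > 0$ or $A_n := \Omega_0(((z_1^1)^*)^{|n|})$ for $n < 0$, nonzero because $A_n v = v$. Since $\K A \K = \K$ for any nonzero $A \in \mathcal B(\h)$ by the standard rank-one sandwich argument, we conclude $\K \subseteq D_0 A_n D_0 \subseteq D_n$. Assembling the spectral subspaces yields the desired $A_q \supseteq C(\mathbb{T}) \otimes \K$. I expect the main obstacle to be the cyclicity step for $\Omega_0(V_0) v$: transferring cyclicity from the whole of $\mathrm{Pol}(\mathrm{Mat}_2)_q$ down to its degree-zero subalgebra is precisely where the coherent-state eigenvector property of $v$ under both $\Omega_0(z_1^1)$ and $\Omega_0((z_1^1)^*)$ becomes essential, and it is peculiar to the coherent representation (as opposed to, e.g., the Fock representation).
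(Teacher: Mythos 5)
Your proof is correct and follows essentially the same route as the paper's: the rank-one vacuum projection is extracted from $\Omega_0(x)=d_q^2\otimes d_q^4\otimes d_q^2$ (you via norm limits of powers, the paper via continuous functional calculus), and the eigenvector property $\Omega_0(z_1^1)v=\Omega_0((z_1^1)^*)v=v$ is used to convert arbitrary elements into degree-zero ones acting transitively on the vacuum --- exactly the paper's monomials $M_{k,j,m}(z_1^1)^{*(k+j+m)}$. The circle-action/spectral-subspace packaging is extra scaffolding that the paper bypasses by directly multiplying $I\otimes\K$ by powers of $\Pi_q(z_2^2)$ and their adjoints, but the substance of the argument is identical.
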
 
\begin{proof}
We only need to prove that \begin{equation}\label{subset}I\otimes \K\subseteq A_{q}.\end{equation} The full claim of~\eqref{inc} then follow from this, since as $\Pi_{q}(z_{2}^{2})=z\otimes I\otimes C_{q}S\otimes I,$ we have 
$$z^{k}\otimes \K=
\begin{cases}
(I\otimes \K)\cdot\Pi_{q}(z_{2}^{2})^{k}, & \text{(for $k\geq 0$),}\\
\Pi_{q}(z_{2}^{2})^{*k}\cdot(I\otimes \K), & \text{(for $k< 0$),} \end{cases}$$
and hence $C(\mathbb{T})\otimes \K\subseteq A_{q}$ by closure. To prove~\eqref{subset}, we first prove $I\otimes P \otimes P\otimes P\in A_{q}.$ Recall the element $x\in \mathrm{Pol}(\mathrm{Mat}_{2})_{q}$ defined by~\eqref{x}. Similarly as to~\eqref{omegax}, we get 
$$
\Pi_{q}(x)=I\otimes d_{q}^{2}\otimes d_{q}^{4}\otimes d_{q}^{2} \in A_{q},
$$
and an application of the continuous functional calculus gives that $I\otimes P\otimes P\otimes P\in A_{q}.$
Recall that for the coherent representation we have $\Omega_{\varphi}(z_{1}^{1})(e_{0}\otimes e_{0}\otimes e_{0})=e^{i\varphi}(e_{0}\otimes e_{0}\otimes e_{0})$ and $\Omega_{\varphi}(z_{1}^{1})^{*}(e_{0}\otimes e_{0}\otimes e_{0})=e^{-i\varphi}(e_{0}\otimes e_{0}\otimes e_{0}).$ It follows that for any $f(z)\in L^{2}(\mathbb{T}),$ we have $$\Pi_{q}(z_{1}^{1})^{k}(f(z)\otimes e_{0}\otimes e_{0}\otimes e_{0})=z^{k}f(z)\otimes e_{0}\otimes e_{0}\otimes e_{0}$$
$$\Pi_{q}(z_{1}^{1})^{*k}(f(z)\otimes e_{0}\otimes e_{0}\otimes e_{0})=z^{-k}f(z)\otimes e_{0}\otimes e_{0}\otimes e_{0}.$$
Moreover, recall that for any basis vector $e_{k}\otimes e_{j}\otimes e_{m}\in \ell^{2}(\mathbb{Z}_{+})^{\otimes 3}$ there is a monomial $M_{k,j,m}$ in $z_{1}^{2},z_{2}^{1},z_{2}^{2}$ and of degree $k+j+m$ such that $$\Omega_{0}(M_{k,j,m})(e_{0}\otimes e_{0}\otimes e_{0})=e_{k}\otimes e_{j}\otimes e_{m}.$$ It follows that for any $f(z)\in L^{2}(\mathbb{T}) ,$ we have 
$$
\Pi_{q}((M_{k,j,m})(z_{1}^{1})^{*(k+j+m)})(f(z)\otimes e_{0}\otimes e_{0}\otimes e_{0})=f(z)\otimes e_{k}\otimes e_{j}\otimes e_{m}
$$
and hence $\Pi_{q}((M_{k,j,m})(z_{1}^{1})^{*(k+j+m)})(I\otimes P\otimes P\otimes P)\in A_{q}$ is the operator
$$\begin{array}{ccc}
f(z)\otimes x\mapsto \langle x, e_{0}\otimes e_{0}\otimes e_{0}\rangle (f(z)\otimes e_{k}\otimes e_{j}\otimes e_{m}), & \text{(for $f(z)\in L^{2}(\mathbb{T})$ and $x\in\ell^{2}(\mathbb{Z}_{+})^{\otimes 3}$)}\end{array}
$$
and these operators generates $I\otimes \K$ as a $C^{*}$-algebra.
\end{proof}
With this result in hand, we can now prove that $A_{q}$ is independent of $q.$
\begin{thm}
Let $\Psi_{q}:B_{0}\to B_{q}$ be the continuous field of $*$-isomorphisms acquired from $\fact$. For every $q,s\in(0,1),$ the homomorphism $\id\otimes (\Psi_{q}\circ \Psi_{s}^{-1})$ is an isomorphism $A_{s}\to A_{q}.$
\end{thm}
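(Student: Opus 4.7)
The plan is to work modulo the compact ideal $C(\mathbb{T})\otimes\K$, which by Lemma~\ref{inclusion} is contained in both $A_s$ and $A_q$. The isomorphism $\Psi_q:B_0\to B_q$ supplied by $\fact$ is built to lift the canonical identifications $B_0/\K\cong E\cong B_q/\K$ given by $\phi_0$ and $\phi_q$, so in particular $\Psi_q(\K)=\K$. Consequently $\id\otimes (\Psi_q\circ\Psi_s^{-1})$ maps $C(\mathbb{T})\otimes\K$ isomorphically onto itself, and under the identifications $\id\otimes\phi_s^{-1}$ and $\id\otimes\phi_q^{-1}$ the induced map on quotients becomes the identity on $C(\mathbb{T})\otimes E$.

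Under those identifications, $A_q/(C(\mathbb{T})\otimes\K)$ becomes the $C^*$-subalgebra $D_q\subseteq C(\mathbb{T})\otimes E$ generated by $\{z\otimes(\Xi_q\oplus\Phi_q)(z_m^j)\}_{m,j=1}^2$ and their adjoints, and analogously $A_s/(C(\mathbb{T})\otimes\K)$ corresponds to $D_s$. Since the induced map of $\id\otimes(\Psi_q\circ\Psi_s^{-1})$ is the identity on $C(\mathbb{T})\otimes E$, the theorem reduces to the claim $D_s=D_q$ inside $C(\mathbb{T})\otimes E$.

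For this key step, I would equip $E$ with the $\mathbb{T}$-action $\alpha$ that, on each direct summand, applies the gauge automorphism $S\mapsto e^{i\varphi}S$ to every $C^*(S)$-tensor factor and multiplies every $C(\mathbb{T})$-factor by $e^{i\varphi}$. Because the gauge action on $C^*(S)$ fixes $d_q$ (and hence $C_q$), a direct check on the formulas~\eqref{im22}--\eqref{im11} shows $\alpha_\varphi\circ(\Xi_q\oplus\Phi_q)=(\Xi_q\oplus\Phi_q)\circ\Psi_{\varphi,\varphi}$ for every $q\in(0,1)$; hence $\alpha$ preserves $(\Xi_q\oplus\Phi_q)(\mathrm{Pol}(\mathrm{Mat}_2)_q)$ and descends to a strongly continuous $\mathbb{T}$-action on $E$ that is manifestly $q$-independent. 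Letting $E^k$ denote the degree-$k$ spectral subspace, the conditional expectation $a\mapsto\int_\mathbb{T}e^{-ik\varphi}\alpha_\varphi(a)\,d\varphi$ applied to approximating sequences in $(\Xi_q\oplus\Phi_q)(\mathrm{Pol}(\mathrm{Mat}_2)_q)$ shows that the image of the degree-$k$ part of $\mathrm{Pol}(\mathrm{Mat}_2)_q$ is dense in $E^k$ for every $q$. Since $D_q$ is generated by degree-$1$ elements, taking closures gives $D_q=\overline{\mathrm{span}}\{z^k\otimes E^k:k\in\mathbb{Z}\}$, visibly independent of $q$.

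Combining this with the first paragraph, $\id\otimes(\Psi_q\circ\Psi_s^{-1})$ maps each $a\in A_s$ into $A_q$: its image lies modulo $C(\mathbb{T})\otimes\K$ in $D_s=D_q=A_q/(C(\mathbb{T})\otimes\K)$, and $C(\mathbb{T})\otimes\K\subseteq A_q$ by Lemma~\ref{inclusion}. The same reasoning applied to $\Psi_s\circ\Psi_q^{-1}$ gives the reverse containment, so the restriction is a $*$-isomorphism. The main obstacle is the third paragraph---verifying that the candidate $\mathbb{T}$-action $\alpha$ genuinely defines a $q$-independent automorphism of all of $E$; this requires both the explicit generator-level check and a continuity argument to extend from the generating sub-$*$-algebras (which differ with $q$) to the common closure $E$.
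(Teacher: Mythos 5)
Your proof is correct, and its overall skeleton coincides with the paper's: both arguments pass to the quotient by $C(\mathbb{T})\otimes\K$ using Lemma~\ref{inclusion}, identify $(\id\otimes p)(A_q)$ with a $C^{*}$-subalgebra of $C(\mathbb{T})\otimes E$ via $\id\otimes\phi_q$ (your $D_q$ is the paper's $F$, cf.~\eqref{quotient}), and use the commuting diagram~\eqref{com} supplied by $\fact$ to see that $\id\otimes(\Psi_q\circ\Psi_s^{-1})$ acts as the identity in that quotient picture before pulling back along $(\id\otimes p)^{-1}$. Where you genuinely diverge is the key step that this subalgebra is independent of $q$: the paper simply reruns the $q\to 0$ generator manipulations of Proposition~\ref{prop} inside $C(\mathbb{T})\otimes E$, whereas you introduce the $\mathbb{T}$-gauge action $\alpha$, check on~\eqref{im22}--\eqref{im11} that $\alpha_\varphi\circ(\Xi_q\oplus\Phi_q)=(\Xi_q\oplus\Phi_q)\circ\Psi_{\varphi,\varphi}$, and identify $D_q$ with $\overline{\mathrm{span}}\{z^k\otimes E^k : k\in\mathbb{Z}\}$, which is manifestly $q$-independent. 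Your route buys a structural, computation-free proof of the key step; the paper's buys uniformity, reusing one technique it has already set up. The ``main obstacle'' you flag is not really one: $\alpha$ is the restriction of a globally defined, strongly continuous action on $\bigl(C(\mathbb{T})\otimes C^{*}(S)^{\otimes 2}\bigr)\oplus\bigl(C^{*}(S)^{\otimes 2}\otimes C(\mathbb{T})\bigr)$, and since your generator check shows each $\alpha_\varphi$ sends a generating set of $E$ to scalar multiples of itself, invariance of the single ($q$-independent, by Proposition~\ref{prop}) algebra $E$ is automatic and no extra continuity argument is needed.
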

\begin{proof}
Clearly, we only need to prove the inclusion 
$$
\left(\id\otimes (\Psi_{q}\circ \Psi_{s}^{-1})\right)(A_{s})\subseteq A_{q}.
$$
By $\lemma$~\ref{inclusion} $C(\mathbb{T})\otimes \K\subseteq A_{q}$ for every $q\in (0,1)$ and hence we have the quotient homomorphism (if we again let $\h:=\ell^{2}(\mathbb{Z}_{+})^{\otimes 3}$)
$$
(\id\otimes p):C(\mathbb{T})\otimes\mathcal B(\h)\to C(\mathbb{T})\otimes\mathcal B(\h)/(C(\mathbb{T})\otimes \K)= C(\mathbb{T})\otimes \mathcal Q(\h)
$$
such that 
\begin{equation}\label{qomega}
\begin{array}{ccc}
(\id\otimes p)\circ \Pi_{q}(z_{m}^{j})=z\otimes \left(\Omega^{(q)}_{0}(z_{m}^{j})+\K\right)= z\otimes \phi_{q}((\Xi_{q}\oplus \Phi_{q})(z_{m}^{j})), & \text{(for $m,j=1,2$)}
\end{array}
\end{equation}
by the properties of the $*$-homomorphisms $\phi_{q}.$ Consider the $*$-representation of $\mathrm{Pol}(\mathrm{Mat}_{2})_{q}$ determined by 
$$
\begin{array}{ccc}
z_{m}^{j}\mapsto z\otimes (\Xi_{q}\oplus \Phi_{q})(z_{m}^{j}), & \text{for $m,j=1,2$}
\end{array}
$$
and consider the closure of the image of $\mathrm{Pol}(\mathrm{Mat}_{2})_{q}$ in $C(\mathbb{T})\otimes E.$ We can use the exact same arguments as in $\proposition$~\ref{prop} to show that the $C^{*}$-sub-algebra of $C(\mathbb{T})\otimes E$ thus acquired does not depend on $q\in (0,1).$ Let us denote this $C^*$-algebra by $F.$ It now follows from~\eqref{qomega} that 
\begin{equation}\label{quotient}
\begin{array}{ccc}
(\id\otimes \phi_{q})(F)=(\id\otimes p)(A_{q}), & \text{(for all $q\in(0,1)$).}
\end{array}
\end{equation}
As $\fact$ implies that the following diagram commutes
\begin{equation}\label{com}
\begin{xy}\xymatrix{
B_{q}\ar[r]^*{\Psi_{s}\circ \Psi_{q}^{-1}}\ar[d]_*{p}& B_{s}\ar[d]^*{p}\\
\phi_{q}(E)\ar[r]_*{\phi_{s}\circ \phi_{q}^{-1}} & \phi_{s}(E)
}\end{xy}
\end{equation}

it then follows that 
$$
(\id\otimes p)((\id\otimes (\Psi_{s}\circ\Psi_{q}^{-1}))(A_{q}))=(\id\otimes (p\circ\Psi_{s}\circ\Psi_{q}^{-1}))(A_{q}))=
$$
$$
=(\id\otimes \phi_{s}\circ \phi_{q}^{-1})((\id\otimes p)(A_{q}))=(\id\otimes \phi_{s}\circ \phi_{q}^{-1})((\id\otimes \phi_{q})(F))=(\id\otimes \phi_{s})(F).
$$
By $\lemma$~\ref{inclusion} and~\eqref{quotient}, we have
$$
(\id\otimes p)^{-1}((\id\otimes \phi_{q})(F))=A_{q}
$$
and this gives the theorem.
\end{proof}


\begin{thebibliography}{99}
\bibitem{bgt} {\sc O. Bershtein, O. Giselsson, L. Turowska}, {\it Maximum modulus priciple for "holomorphic functions" on the quantum matrix ball}, {\rm  J. Funct. Anal. 276 (2019), no. 5, 1479–1509.}
\bibitem{gisel1} {\sc O. Giselsson}, {\it The Universal $C^{*}$-Algebra of the Quantum Matrix Ball and its Irreducible $*$-representations}, {\it arXiv:1801.10608}
\bibitem{gisel2} {\sc O. Giselsson}, {\it $q$-independence of the Jimbo-Drinfeld quantization}, {\rm Commun. Math. Phys. (2020).}
\bibitem{woj} {\sc J. Hee Hong, W. Szymanski}, {\it Quantum Spheres and Projective Spaces
as Graph Algebras}, {\rm Commun. Math. Phys. {\bf 232} (2002), 157--188.}
\bibitem{jsw}, {\sc P.E.T. Jørgensen, L.M. Schmitt, R.F. Werner}, {\it Positive representations of general commutation relations allowing Wick ordering}, {\rm J. Funct. Anal. 134 (1995), no. 1, 33-99}
\bibitem{KlSh} {\sc A. Klimyk, K. Schm\"udgen}, {\it Quantum Groups and Their Representations}, {\rm Springer-Verlag, 1997}
\bibitem{KorS} {\sc L.I. Korogodski, Y.S. Soibelman}, {\it Algebra of Functions on Quantum Groups: Part I}, {\rm  American Mathematical Society, Providence, RI, 1998}
\bibitem{roma} {\sc M. Matassa AND R. Yuncken}, {\it Crystal Limits of Compact Semisimple Quantum Groups as Higher-Rank Graph Algebras}, {\rm Journal für die reine und angewandte Mathematik (Crelles Journal), vol. 2023, no. 802 (2023), 173--221.}
\bibitem{gnagy} {\sc G. Nagy}, {\it A RIGIDITY PROPERTY FOR QUANTUM $SU(3)$ GROUPS}, {\rm Advances in geometry, 297--336.}
\bibitem{pro_tur} {\sc D. Proskurin and L. Turowska}, {\it On the $C^*$-algebra associated with $Pol(Mat_{2,2})_q$}, {\rm Methods Funct. Anal. Topology  7  (2001),  no. 1, 88--92.}
\bibitem{ssv} {\sc D. Shklyarov, S. Sinel’shchikov, L. Vaksman}, {\it Fock representations and quantum matrices}, {\rm Internat. J. Math. 15 (9) (2004) 855–894.}
\bibitem{ssvs1} {\sc D. Shklyarov, S. Sinel’shchikov, and L. Vaksman}, {\it Quantum matrix ball: Differential and integral calculi}, {arXiv:math/9905035}
\bibitem{SV2} {\sc S. Sinel'shchikov, L. Vaksman}, {\it On q-analogues of bounded symmetric domains and Dolbeault complexes}, {\rm Mat. Fiz. Anal. Geom. 1 (1998), 75--100}
\bibitem{turow} {\sc L. Turowska}, {\it Representations of a q-analogue of the $*$-algebra $\mathrm{Pol}(\mathrm{Mat}_{2,2})$}, {\rm  Journal of Physics A: Mathematical and General 34(10):2063}

\bibitem{vak}{\sc L. Vaksman}, {\it Quantum Bounded Symmetric Domains}, {\rm American Mathematical Society, Providence, RI, 2010.}
\bibitem{woron} {\sc S.L. Woronowicz}, {\it Twisted $SU(2)$ Group. An Example of a
Non-Commutative Differential Calculus}, {\rm Publ. Res. Inst. Math. Sci. 23 (1987), no. 1, 117–181}
\end{thebibliography}
\end{document}